\theoremstyle{plain}
\theoremstyle{plain}
\newtheorem{theorem}{Theorem}[section]
\newtheorem{lemma}[theorem]{Lemma}
\newtheorem{corollary}[theorem]{Corollary}
\theoremstyle{definition}
\newtheorem{defin}[theorem]{Definition}
\newtheorem{remark}[theorem]{Remark}
\newtheorem{example}{Example}
\theoremstyle{remark}
\numberwithin{equation}{section}
\def\dys{\displaystyle}
\newcommand{\car}[1]{\raise1pt\hbox{$\chi$}_{#1}}
\def\og{\leavevmode\raise.3ex\hbox{$\scriptscriptstyle\langle\!\langle$~}}
\def\fg{\leavevmode\raise.3ex\hbox{~$\!\scriptscriptstyle\,\rangle\!\rangle$}}
\author[F. Oliva]{Francescantonio Oliva}
\address[F. Oliva]{Istituto Nazionale di Alta Matematica (Indam), Dipartimento di Scienze di Base e Applicate per l' Ingegneria, ``Sapienza" Universit\`a di Roma, Via Scarpa 16, 00161 Roma, Italy 
\\ francesco.oliva@sbai.uniroma1.it}
\author[F. Petitta]{Francesco Petitta}
\address[F. Petitta]{Dipartimento di Scienze di Base e Applicate per l' Ingegneria, ``Sapienza" Universit\`a di Roma, Via Scarpa 16, 00161 Roma, Italy 
	\\ francesco.petitta@sbai.uniroma1.it}
\keywords{Singular parabolic problems; Existence and uniqueness; Measure data} \subjclass[2010]{35K10, 35K20, 35K65, 35K67, 35R06}
\begin{document}

\title{A nonlinear parabolic problem with singular terms and nonregular data}

\begin{abstract}
We study existence of nonnegative solutions to a nonlinear parabolic boundary value problem with a general singular lower order term and  a nonnegative measure as nonhomogeneous datum,  of the form 
$$
\begin{cases}
\dys   u_t - \Delta_p u  = h(u)f+\mu & \text{in}\ \Omega \times (0,T),\\
u=0 &\text{on}\ \partial\Omega \times (0,T),\\
u=u_0 &\text{in}\ \Omega \times \{0\},
\end{cases}
$$
where $\Omega$ is an open bounded subset of $\mathbb{R}^N$ ($N\ge2$), $u_0$ is a nonnegative integrable function, $\Delta_p$ is the $p$-laplace operator, $\mu$ is a nonnegative bounded Radon measure on $\Omega \times (0,T)$ and  $f$ is a nonnegative function of $L^1(\Omega \times (0,T))$. The term $h$ is a positive  continuous function possibly blowing up at the origin. Furthermore, we show uniqueness of finite energy solutions in presence of a nonincreasing $h$.
\end{abstract}

\maketitle
\tableofcontents

\section{Introduction}

Let $\Omega$ be an open bounded subset of $\mathbb{R}^N$; we are mainly concerned with nonnegative  solutions of problems modeled by
\begin{equation}\label{pbi}
\begin{cases}
\dys   u_t - \Delta_p u  = fu^{-\gamma}+\mu & \text{in}\ \Omega \times (0,T),\\
u=0 &\text{on}\ \partial\Omega \times (0,T),\\
u=u_0 &\text{in}\ \Omega \times \{0\},
\end{cases}
\end{equation}
where $p>2-\frac{1}{N+1}$, $u_0$ is a nonnegative integrable function, $\mu$ is a nonnegative bounded Radon measure on $\Omega \times (0,T)$,   $f$ is a nonnegative function in  $L^1(\Omega \times (0,T))$, and $\gamma>0$.

The interest in problems as  \eqref{pbi} (with $p=2$ and smooth data) started in \cite{fw} in connection with the study of   thermo-conductivity  (${u^{\gamma}}$ represented  the resistivity of the material), and later  in the study of signal transmissions  and in the theory non-Newtonian pseudoplastic fluids (\cite{no,nc,gow}). 

From the purely mathematical point of view, a complete setting of the  theory, in the stationary case with smooth data, was developed over the years starting  by the seminal papers \cite{stu,crt}, until the remarkable improvements given in  in \cite{lm} ($p=2$ and $\mu=0$).  

Again in the stationary case with $p=2$ the weak theory was settled in \cite{bo} (see also \cite{bgh,gmm,gmm2})) while both existence and uniqueness in presence of nonlinear operators was proven in \cite{cst,op,op2,sz}. The case of possibly measure as data was faced in \cite{op, po, do}

\medskip 

In the  parabolic setting the literature for problems as in \eqref{pbi} is, by far, more limited.  If $f\equiv 0$ (the case quasilinear case with measure data) we refer to \cite{ppp} (see also \cite{dpp, p,pp}) for a complete account on existence and uniqueness  in the context of renormalized solutions. Moreover,  the case of a  bounded zero-order nonlinearity  has been  treated in \cite{lj}. Both weak and strong regularity of the, so called, SOLA solutions (solutions obtained as limit of approximations) have been also  obtained   (see \cite{bdgo,km,bdp} and references therein). 

  Finally, concerning the singular model case,  for suitably  smooth data $f$ and $\mu$, the existence of  solutions to problems as in \eqref{pbi} was investigated in \cite{dbg} (see also \cite{bep,dbdc,dbg0}).  

\medskip

In this note we consider nonnegative integrable data $f$ and $u_0$, a  nonnegative bounded Radon measure as nonhmogeneous source and a merely continuous, and possibly singular at the origin, nonlinear zero-order term  $h(s)$. Under these general assumptions we prove existence of a nonnegative solution for problem 
$$
\begin{cases}
\dys   u_t - \Delta_p u  = h(u)f+\mu & \text{in}\ \Omega \times (0,T),\\
u=0 &\text{on}\ \partial\Omega \times (0,T),\\
u=u_0 &\text{in}\ \Omega \times \{0\},
\end{cases}
$$
and uniqueness of finite energy solutions (in the homogeneous case) provided $h$ is nonincreasing.

\medskip

The plan of the paper is as follows: in Section \ref{1} we present our main assumptions and we state the main existence result. Section \ref{main} is devoted to the proof of the existence result; the approximation scheme is presented and the basic a priori estimates are obtained (Section \ref{approx_sec}), then the passage to the limit is performed in Section   \ref{pass}, while in Section \ref{comment} some further regularity issues are discussed.  Finally, in Section  \ref{unicas},  a uniqueness result is presented. 
 
\medskip\medskip\medskip\medskip

\subsection*{Notations.}  The parabolic cylinder is denoted by $Q= \Omega \times (0,T)$  (by $Q_t= \Omega \times (0,t)$ for a generic $t>0$), while its lateral surface is $\Gamma=\partial\Omega\times (0,T)$. We denote by $\mathcal{M}(Q)$  the space of Radon measures  with bounded total variation on $Q$.   
We will denote with $r^*= \frac{rN}{N-r}$ the Sobolev conjugate of $1 \le r < N$, while $r'= \frac{r}{r-1}$ indicates the H\"older conjugate of $r>1$.
\\For fixed $k>0$ we will made use of the truncation functions $T_{k}$ and $G_{k}$ defined, resp.,   as 
$
T_k(s)=\max (-k,\min (s,k)), 
$
and 
$
G_k(s)=(|s|-k)^+ \operatorname{sign}(s). 
$
For $\eta,\delta>0$, we define 
\begin{equation}\label{Ttilde}
\tilde{T}_{k,\eta}(s):=\int_{0}^{s} T_{k}^{\eta}(t) \ dt,
\end{equation}
and 
\begin{align}\label{Vdelta}
\displaystyle
V_{\delta}(s):=
\begin{cases}
1 \ \ &s\le \delta, \\
\displaystyle\frac{2\delta-s}{\delta} \ \ &\delta <s< 2\delta, \\
0 \ \ &s\ge 2\delta.
\end{cases}
\end{align}

 For the sake of simplicity we will use the simplified notations $$\int_Q f = \int_{0}^{T}\int_\Omega f=\int_{0}^{T}\int_\Omega f(x,t)\ dxdt\,,$$ and
$$\int_\Omega f = \int_\Omega f(x,t)\ dx \,,$$
when referring to integrals when no ambiguity on the variable of integration is possible.

Finally we denote by 
$$\Omega_\epsilon:= \{x\in \Omega: {\rm dist}(x, \partial\Omega)<\epsilon\}\,,$$
which is well defined for a sufficiently smooth $\partial\Omega$, say Lipschitz. 

\medskip

If no otherwise specified, we will denote by $C$ several constants whose value may change from line to line and, sometimes, on the same line. These values will only depend on the data but they will never depend on the indexes of the sequences we will often introduce.

\section{Setting and main existence result}\label{1}

Let $\Omega$ be a bounded  and smooth open subset of $\mathbb{R}^N (N\ge 2)$, and let $T>0$.  We consider the following nonlinear parabolic problem 

\begin{equation}
\begin{cases}
\displaystyle u_t -\operatorname{div}(a(x,t,\nabla u)) = h(u)f + \mu &  \text{in}\, Q, \\
u(x,t)=0 & \text{on}\ \Gamma,\\
u(x,0)=u_0(x) &  \text{in}\, \Omega, 
\label{pb}
\tag{P}
\end{cases}
\end{equation}

\noindent where $f\ge 0$ belongs to $L^1(Q)$, $\mu \ge 0$ belongs to $\mathcal{M}(Q)$, $\displaystyle{a(x,t,\xi):\Omega\times(0,T)\times\mathbb{R}^{N} \to \mathbb{R}^{N}}$ is a Carath\'eodory function satisfying
\begin{align}
&a(x,t,\xi)\cdot\xi\ge \alpha|\xi|^{p}, \ \ \ \alpha>0,
\label{cara1}\\
&|a(x,t,\xi)|\le \beta|\xi|^{p-1}, \ \ \ \beta>0,
\label{cara2}\\
&(a(x,t,\xi) - a(x,t,\eta )) \cdot (\xi -\eta) > 0,
\label{cara3}	
\end{align}
for every $\xi\neq\eta$ in $\mathbb{R}^N$ and for almost every $(x,t)\in Q$,  with $2-\frac{1}{N+1}<p<N$. A prototype of the  operators we consider is the usual $p$-laplacian defined as $-\operatorname{div}(|\nabla u|^{p-2}\nabla u)$. The initial datum $u_0$ is nonnegative and it belongs to $L^1(\Omega)$. The bound from below on $p$, even if  technical, is standard as it ensures the gradient of the solution to  belong  to $L^1 (0,T; L^1_{\rm loc}{(\Omega)})$.  
 The term $h:[0,\infty)\to [0,\infty]$ is a continuous and possibly singular function with $h(0)\not=0$ which it is finite outside the origin and such that
\begin{equation}
\exists \gamma\ge 0, C,s_0>0: h(s)\le \frac{C}{s^\gamma} \ \text{for all } s\le s_0\,,
\label{h1}\tag{h}
\end{equation}
and such that $h$ is bounded  in $[s_0,\infty)$. 

In the following it will be useful the introduction of the notation $\sigma:=\max(1,\gamma)$.
Let us give  the notion of solution we shall  consider from now on
\begin{defin}
A {\it distributional solution} of problem \eqref{pb} is a function $u\in L^1(0,T;W^{1,1}_{\rm loc}(\Omega))$ with  both  $|a(x,t,\nabla u)| $ and $ h(u)f$ belonging to $L^1(0,T;L^1_{\rm loc}(\Omega))$,  such that 

	\begin{equation}\label{weakdef2}
		\lim_{\epsilon \to 0}\frac{1}{\epsilon}\int_{\Omega_\epsilon}T_k(u) = 0 \ \ \text{for a.e. }t\in (0,T), \ \forall k>0\,,
	\end{equation}
	
	and
	
	\begin{equation} \displaystyle -\int_{Q} u\varphi_t  - \int_{\Omega} u_0\varphi(x,0) + \int_{Q}a(x,t,\nabla u) \cdot \nabla \varphi  =\int_{Q} h(u)f\varphi  +\int_{Q} \varphi d\mu,\label{weakdef3}\end{equation}
	for every $\varphi \in C^1_c(\Omega\times [0,T)).$	
	\label{weakdef}
\end{defin}
\begin{remark} 
Let us remark  that (\ref{weakdef2}) is the weak way we recover that $u=0$ on $\Gamma$. Condition \eqref{weakdef2} is known to be weaker of the classical request to have a solution lying in a space with zero Sobolev trace and it allows to unify the discussion of both $\gamma\leq 1$ and $\gamma>1$ (the same was done, in the stationary case, in \cite{op2}) and to avoid truncations in the definition. We stress that this is the only point where we exploit  the regularity of $\partial \Omega$. If we only assume $\Omega$ to be an open and bounded subset of $\mathbb{R}^N$ then everything works fine provided, 
in the case $\gamma>1$, the boundary condition is intended as $T_{k}^{\frac{\gamma-1+p}{p}} (u)\in L^p(0,T;W^{1,p}_0(\Omega))$, for any $k>0$ i.e. a suitable power of every truncation of $u$ lies, for almost every $t\in (0,T)$, in a Sobolev space with zero classical trace.  If $\gamma\leq 1$ then one may assume  $T_{k}(u)\in L^p(0,T;W^{1,p}_0(\Omega))$, for any $k>0$   (this is how, for instance, the boundary condition was given in \cite{bo,op,po}).  
\end{remark}
\begin{theorem}
	Let $a$ satisfy \eqref{cara1}, \eqref{cara2}, \eqref{cara3}, let $h$ satisfy \eqref{h1} and suppose that $f \in L^1(Q)$, $u_0\in L^1(\Omega)$,  and $\mu \in \mathcal{M}(Q)$ are nonnegative. Then there exists a nonnegative distributional solution $u$ of problem \eqref{pb}.
	\label{theo1}
\end{theorem}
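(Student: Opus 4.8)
The plan is to construct the solution $u$ as the limit of a sequence of approximating problems in which both the singularity of $h$ and the irregularity of the data $f$, $\mu$, $u_0$ are smoothed out. Concretely, I would fix mollified data: let $f_n := T_n(f)$ (or $\min(f,n)$), let $\mu_n$ be a sequence of nonnegative $L^\infty(Q)$ functions (or smooth functions) converging to $\mu$ $\ast$-weakly in $\mathcal M(Q)$ with $\|\mu_n\|_{L^1(Q)}\le \|\mu\|_{\mathcal M(Q)}$ — this is the standard decomposition/mollification of a measure, and one should keep track of the diffuse and concentrated parts as in \cite{ppp} if renormalized-type estimates are needed — and let $u_{0,n}:=T_n(u_0)\in L^\infty(\Omega)$. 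For the nonlinearity I would set $h_n(s):= T_n(h(s))$, which is bounded and continuous, and consider
\begin{equation*}
\begin{cases}
(u_n)_t - \operatorname{div}(a(x,t,\nabla u_n)) = h_n\!\left(u_n + \tfrac1n\right) f_n + \mu_n & \text{in } Q,\\
u_n = 0 & \text{on }\Gamma,\\
u_n(x,0) = u_{0,n}(x) & \text{in }\Omega.
\end{cases}
\end{equation*}
The shift $u_n+\frac1n$ inside $h_n$ keeps the right-hand side bounded and nonnegative; existence of a weak finite-energy solution $u_n\in L^p(0,T;W^{1,p}_0(\Omega))\cap C([0,T];L^1(\Omega))$ with $(u_n)_t\in L^{p'}(0,T;W^{-1,p'}(\Omega))$ for this regularized problem is classical (Leray--Lions plus compactness, e.g. as in \cite{lj,bdgo}). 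A comparison/monotonicity argument (using $h_n\le h_{n+1}$ and the monotonicity of the data in $n$, together with the comparison principle for the operator) shows the sequence $u_n$ is nondecreasing in $n$, hence pointwise convergent to some $u\ge 0$; one also proves $u_n\ge u_1>0$ on every compact subset of $Q$ — a strict positivity bound away from $t=0$ coming from the strong maximum principle applied to $u_1$, using that $h(0)\neq 0$ so the source in the first approximate problem is nontrivial — which will be what tames the singularity of $h$ in the limit.

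Next I would derive the a priori estimates uniform in $n$. Testing with $T_k(u_n)$ gives the standard bound $\|T_k(u_n)\|_{L^p(0,T;W^{1,p}_0)}^p \le Ck(\|f h(u_1)\|_{L^1(Q\cap\{u_1\le s_0\}\text{-part})} + \|h\|_{L^\infty[s_0,\infty)}\|f\|_{L^1}+\|\mu\|_{\mathcal M}+\|u_0\|_{L^1})$ once the singular part of the source is controlled; the point is that on $\{u_n\le s_0\}$ one has $h_n(u_n+\frac1n)f_n \le h(u_n+\frac1n)f \le C(u_n+\frac1n)^{-\gamma} f$, and testing with $(u_n+\frac1n)^{?}$-type functions or with $\tilde T_{k,\eta}$-derivatives — more precisely testing with $\big((u_n+\frac1n)^{\sigma}-(\frac1n)^\sigma\big)$ truncated, or with the functions $T_k^\eta$ built from \eqref{Ttilde} — absorbs the factor $(u_n+\frac1n)^{-\gamma}$ against the coercivity, giving a bound on $\|(u_n+\frac1n)^{\frac{\gamma-1+p}{p}}\wedge k^{\text{power}}\|$ in $L^p(0,T;W^{1,p}_0)$ for $\gamma>1$, and on $T_k(u_n)$ in $L^p(0,T;W^{1,p}_0)$ for $\gamma\le 1$; this is exactly why $\sigma=\max(1,\gamma)$ and the boundary-condition dichotomy in the Remark appear. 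From these truncation estimates plus the lower bound $p>2-\frac1{N+1}$ one gets, by the classical Boccardo--Gallouët--type interpolation, an estimate for $u_n$ in $L^q(0,T;W^{1,q}_0(\Omega))$ for some $q>1$ (in fact for every $q<p - \frac{N}{N+1}$), hence $\nabla u_n$ bounded in $L^1_{\text{loc}}$; and $|a(x,t,\nabla u_n)|$ bounded in $L^1(0,T;L^1_{\text{loc}})$ via \eqref{cara2} together with control of $\{u_n\le s_0\}$ regions. Finally, the time-derivative estimate: $(u_n)_t = \operatorname{div}(a(\cdot,\nabla u_n)) + h_n(u_n+\frac1n)f_n + \mu_n$ is bounded in $L^1(0,T;W^{-1,q'}(\Omega)) + L^1(Q) + \mathcal M(Q)$, so by Aubin--Simon-type compactness (in the version adapted to measure-valued time derivatives, as in \cite{ppp,bdgo}) $u_n\to u$ strongly in $L^1(Q)$ and a.e., consistently with the monotone limit already identified.

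The passage to the limit then requires: (i) strong convergence of the truncated gradients $\nabla T_k(u_n)\to \nabla T_k(u)$ in $L^p$, which I would get by the standard Landes-type / Boccardo--Murat argument — test the equation for $u_n-u_m$ (or $u_n$ against $T_\delta(u_n - T_k(u))$-type functions) and use the monotonicity \eqref{cara3} together with the a.e. convergence and the equi-integrability supplied by the measure decomposition; this gives $\nabla u_n\to\nabla u$ a.e. in $Q$, hence $a(x,t,\nabla u_n)\to a(x,t,\nabla u)$ a.e., and then Vitali plus the local bounds upgrade this to strong $L^1_{\text{loc}}$ convergence of the fluxes so that $\int_Q a(\cdot,\nabla u_n)\cdot\nabla\varphi \to \int_Q a(\cdot,\nabla u)\cdot\nabla\varphi$ for $\varphi\in C^1_c(\Omega\times[0,T))$; (ii) convergence of the singular source: since $u_n\uparrow u$ and $u_n\ge u_1>0$ locally uniformly, $h_n(u_n+\frac1n)\to h(u)$ a.e. and is dominated on each compact set; combined with $f_n\uparrow f$ in $L^1$ one passes to the limit in $\int_Q h_n(u_n+\frac1n)f_n\varphi$ by a generalized dominated/monotone convergence argument (splitting $\{u\le s_0\}$ and $\{u>s_0\}$, using \eqref{h1} on the former with the local positivity bound and boundedness of $h$ on the latter) — and crucially one must check $h(u)f\in L^1(0,T;L^1_{\text{loc}})$, which follows from these same estimates; (iii) the boundary condition \eqref{weakdef2}: each $u_n$ vanishes in the Sobolev-trace sense, so $\frac1\epsilon\int_{\Omega_\epsilon}T_k(u_n)\to 0$ as $\epsilon\to0$ uniformly enough in $n$ (using the $W^{1,p}_0$ bound on $T_k(u_n)$ for $\gamma\le1$, resp. on a power of the truncation for $\gamma>1$, and a Hardy-type inequality near $\partial\Omega$), passing to the limit to get \eqref{weakdef2} for $u$; the initial datum passes by the $C([0,T];L^1)$ bound and the strong $L^1$ convergence. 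The main obstacle, as in all such singular problems, is point (ii) near the set $\{u = 0\}$ together with its interaction with point (i): one must rule out loss of mass of the singular source both in the interior and — via \eqref{weakdef2} — at the boundary, and simultaneously obtain the gradient-a.e.-convergence in a framework where the natural test functions are not admissible because $u$ need not lie in the energy space (hence the recourse to truncations $T_k$, the auxiliary functions $\tilde T_{k,\eta}$ and $V_\delta$ from \eqref{Ttilde}--\eqref{Vdelta}, and the careful tracking of the concentrated part of $\mu$).
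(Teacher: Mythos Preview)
Your approximation scheme and overall outline are close to the paper's, but the mechanism you propose for controlling the singular source near $\{u=0\}$ rests on two claims that fail under the hypotheses of Theorem~\ref{theo1}. First, the monotonicity $u_n\uparrow$ via comparison requires $h$ to be nonincreasing (so that $s\mapsto h_n(s+\tfrac1n)$ can be compared with $s\mapsto h_{n+1}(s+\tfrac1{n+1})$) and the $\mu_n$ to be increasing in $n$; neither is assumed here --- $h$ is only continuous with the growth bound \eqref{h1}, and a generic mollification of $\mu$ is not monotone. Second, and more seriously, the strict interior positivity $u_n\ge u_1>0$ is not available: for general Carath\'eodory $a(x,t,\xi)$ satisfying only \eqref{cara1}--\eqref{cara3} there is no strong maximum principle, and even for the model $p$-Laplacian with $p>2$ the finite speed of propagation lets $u_1$ vanish on open subsets of $Q$ whenever $f$, $\mu$, $u_0$ are supported away from them. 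Both gaps are fatal to your step (ii), since your domination of $h_n(u_n+\tfrac1n)f_n$ on $\{u_n\le s_0\}$ and your bound $\|fh(u_1)\|_{L^1}$ in the $T_k$-estimate rely precisely on a uniform lower bound for $u_n$.

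The paper avoids positivity and monotonicity altogether. The a priori estimates come from testing with $T_1^\sigma(u_n)$ and with $(T_k(u_n)-k)\varphi^p$, yielding only \emph{local} bounds in $L^q(0,T;W^{1,q}_{\rm loc})$; compactness of $u_n$ and a.e.\ convergence of $\nabla u_n$ then follow from Aubin--Simon and Boccardo--Murat without any ordering. The key point you are missing is the direct control of $\int_{\{u_n\le\delta\}}h_n(u_n)f_n\varphi$: one tests \eqref{pbapprox} with $V_\delta(u_n)\varphi$; since $V_\delta'\le 0$ the diffusion term has the good sign, the time term is $O(\delta)$ because $0\le\Phi_\delta\le\delta$, and the cross term $\int|\nabla u_n|^{p-1}|\nabla\varphi|V_\delta(u_n)$ is $O(\delta^{1/p'})$ by \eqref{stimatk}. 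Hence $\limsup_n\int_{\{u_n\le\delta\}}h_n(u_n)f_n\varphi\le C\delta^{1/p'}\to 0$, with no lower bound on $u_n$ required. You mention $V_\delta$ in your closing sentence as one of several tools, but in your plan it is never actually deployed; it is in fact the heart of the argument.
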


\section{Existence of a distributional solution}
\label{main}
In this section we prove Theorem \ref{theo1}.  To deduce the existence of a distributional solution we work by approximation. First we introduce the approximating scheme and we get basic a priori estimates on the approximating solutions. Then we pass to the limit,  the main difficulty relying  in carefully treat the nonlinear term on the set where  the approximating solutions vanish  and in recovering the boundary datum. At the end of this section we provide some further regularity results on the solution we obtained. 

\subsection{Approximation scheme and a priori estimates}\label{approx_sec}
Consider  the following scheme of approximation
\begin{equation}
\begin{cases}
\displaystyle (u_n)_t -\operatorname{div}(a(x,t,\nabla u_n)) = h_n(u_n)f_n + \mu_n &  \text{in}\, Q, \\
u_n(x,t)=0 & \text{on}\ \Gamma,\\
u_n(x,0)=u_{0_n}(x) &  \text{in}\, \Omega, 
\label{pbapprox}
\end{cases}
\end{equation}
where $h_n(s):= T_n(h(s)), f_n:= T_n(f), u_{0_n}(x):=T_n(u_0(x))$ and $\mu_n$ is a sequence of smooth  functions, bounded in $L^1(Q)$, that converges in the narrow topology of measures to $\mu$. The existence of such a sequence $\mu_n$ is obtained by standard convolution arguments.

 First of all we need to show the existence of a weak solution to \eqref{pbapprox}. The proof, which is based on the Schauder fixed point theorem, is quite standard but we sketch it for completeness. 
\begin{lemma}
	Let $a$ satisfy \eqref{cara1}, \eqref{cara2} and \eqref{cara3}. Then, for any fixed $n\in\mathbb{N}$, there exists a nonnegative solution $u_n \in L^p(0,T;W^{1,p}_0(\Omega)) \cap L^\infty(Q)$ such that $(u_{n})_t \in L^{p'}(0,T;W^{-1,p'}(\Omega)) $ to problem \eqref{pbapprox}.
	\label{exappr} 
\end{lemma}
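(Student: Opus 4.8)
The plan is to prove Lemma \ref{exappr} by a Schauder fixed-point argument applied to a suitable solution map on $L^p(0,T;W^{1,p}_0(\Omega))$, combined with standard monotone-operator theory for the linearized problem. Fix $n\in\mathbb N$ and recall that $h_n = T_n(h)$, $f_n=T_n(f)$, $u_{0_n}=T_n(u_0)$ and $\mu_n$ are all bounded: $0\le h_n\le n$, $0\le f_n\le n$, $0\le u_{0_n}\le n$ in $L^\infty$, and $\mu_n\in L^\infty(Q)$ (in fact $C^\infty$), with $\|\mu_n\|_{L^1(Q)}\le C$. The key point making the problem tractable is that the right-hand side is a priori bounded in $L^\infty(Q)$ once we freeze the argument of $h_n$: if $v\in L^p(0,T;W^{1,p}_0(\Omega))$ is given, then $g_v := h_n(v)f_n + \mu_n$ belongs to $L^\infty(Q)$ with a bound depending only on $n$ (and $\|\mu_n\|_\infty$), independent of $v$.

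First I would define the map $\mathcal S: v\mapsto u$, where $u$ is the unique weak solution of the (now non-singular, right-hand-side-frozen) problem
\begin{equation}
\begin{cases}
u_t - \operatorname{div}(a(x,t,\nabla u)) = g_v & \text{in } Q,\\
u = 0 & \text{on } \Gamma,\\
u(x,0) = u_{0_n}(x) & \text{in } \Omega,
\end{cases}
\end{equation}
with $g_v = h_n(v)f_n+\mu_n \in L^\infty(Q)$. Existence and uniqueness of $u\in L^p(0,T;W^{1,p}_0(\Omega))$ with $u_t\in L^{p'}(0,T;W^{-1,p'}(\Omega))$ is classical: the operator $-\operatorname{div}(a(x,t,\nabla\cdot))$ is bounded, coercive (by \eqref{cara1}) and monotone (by \eqref{cara3}) from $L^p(0,T;W^{1,p}_0(\Omega))$ to its dual, and $g_v\in L^{p'}(0,T;W^{-1,p'}(\Omega))$, so one invokes the standard surjectivity result for parabolic problems governed by monotone operators (Lions). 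Nonnegativity of $u$ follows by testing the equation with $u^- = \min(u,0)$, using $u_{0_n}\ge 0$, $g_v\ge 0$ and coercivity to conclude $u^-\equiv 0$. Boundedness $u\in L^\infty(Q)$ follows from $g_v\in L^\infty(Q)$ and $u_{0_n}\in L^\infty(\Omega)$ by the standard Stampacchia / De Giorgi truncation argument (test with $G_k(u)$), giving $\|u\|_{L^\infty(Q)}\le C(n)$ uniformly in $v$.

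Next I would verify the hypotheses of Schauder's theorem. Testing the frozen problem with $u$ itself and using \eqref{cara1} together with $\|g_v\|_{L^{p'}(0,T;W^{-1,p'})}\le C(n)$ yields an a priori bound $\|u\|_{L^p(0,T;W^{1,p}_0(\Omega))}\le R$ with $R=R(n)$ independent of $v$; hence $\mathcal S$ maps the closed ball $B_R\subset L^p(0,T;W^{1,p}_0(\Omega))$ into itself (indeed into a fixed ball, so one restricts to $B_R$). Compactness of $\mathcal S$ comes from the bound on $u_t$ in $L^{p'}(0,T;W^{-1,p'}(\Omega))$ together with the bound in $L^p(0,T;W^{1,p}_0(\Omega))$: by the Aubin–Lions–Simon lemma the image is relatively compact in $L^p(Q)$, and since $g_v$ is bounded in $L^\infty$ one upgrades, via the monotonicity argument (Minty / Browder), to strong convergence in $L^p(0,T;W^{1,p}_0(\Omega))$ along subsequences; this gives continuity of $\mathcal S$ as well, since if $v_j\to v$ in $L^p(0,T;W^{1,p}_0(\Omega))$ then $h_n(v_j)f_n\to h_n(v)f_n$ in every $L^q(Q)$, $q<\infty$, by continuity and boundedness of $h_n$ and dominated convergence, so $g_{v_j}\to g_v$ in $L^{p'}(0,T;W^{-1,p'})$ and the corresponding solutions converge. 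By Schauder's fixed point theorem $\mathcal S$ has a fixed point $u_n$, which is precisely a nonnegative weak solution of \eqref{pbapprox} with the stated regularity. The main obstacle — really the only delicate point — is the passage to the limit inside the monotone operator term when proving compactness/continuity of $\mathcal S$: one must show that weak convergence of the $\nabla u$'s plus the equation forces strong convergence of the gradients, which is handled by the classical Minty-type argument using \eqref{cara3} (testing with $u_j - u$ and using $(u_j)_t$ bounded, together with an integration-by-parts / lower-semicontinuity step for the time derivative); everything else is routine.
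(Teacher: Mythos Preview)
Your argument is correct and follows the same Schauder fixed-point strategy as the paper, but you make the compactness step harder than it needs to be by working in $L^p(0,T;W^{1,p}_0(\Omega))$ rather than in $L^p(Q)$. The paper freezes $v\in L^p(Q)$, defines the map $G:L^p(Q)\to L^p(Q)$, and checks invariance of a ball, continuity, and relative compactness of the image all in $L^p(Q)$. Compactness is then immediate from the bound on $w$ in $L^p(0,T;W^{1,p}_0(\Omega))$ together with the bound on $w_t$ in $L^{p'}(0,T;W^{-1,p'}(\Omega))$ via Aubin--Lions--Simon, and continuity follows from strong convergence of the frozen right-hand sides plus uniqueness of the linearized problem; no Minty/Browder argument and no strong gradient convergence are needed at this stage. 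By contrast, your choice of $L^p(0,T;W^{1,p}_0(\Omega))$ forces you, for compactness, to upgrade the $L^p(Q)$-compactness to strong convergence of $\nabla u_j$ in $L^p(Q)$: this requires passing to weak limits in the right-hand side (which need not converge since $v_j$ is only bounded), identifying the flux via Minty, and then running a Leray--Lions type argument to deduce $\int_Q (a(x,t,\nabla u_j)-a(x,t,\nabla u))\cdot\nabla(u_j-u)\to 0$ and hence strong $L^p$-convergence of the gradients. All of this can be carried out under \eqref{cara1}--\eqref{cara3}, so there is no gap, but it is extra machinery that the paper sidesteps entirely. On the plus side, you spell out explicitly the nonnegativity (testing with $u^-$) and the $L^\infty$ bound (Stampacchia truncation with $G_k(u)$), which the paper merely asserts.
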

\begin{proof}
 Let $n\in\mathbb{N}$ be fixed,  $v \in L^p(Q)$ and  consider the following problem
\begin{equation}
\begin{cases}
\displaystyle w_t - \operatorname{div}(a(x,t,\nabla w)) =h_n(v)f_n + \mu_n &  \text{in}\, Q, \\
w(x,t)=0 & \text{on}\ \Gamma,\\
w(x,0)=u_{0_n}(x) &  \text{in}\, \Omega.
\end{cases}
\label{pbs}
\end{equation}
It follows from classical theory (see \cite{lions}) that problem \eqref{pbs} admits a unique solution $w \in L^p(0,T;W^{1,p}_0(\Omega)) \cap C([0,T];L^2(\Omega))$ such that $w_t \in L^{p'}(0,T;W^{-1,p'}(\Omega))$ for every fixed $v \in L^p(Q)$. Furthermore $w$ belongs to $L^\infty(Q)$. \\
Our aim is to prove the existence of a fixed point for the  map
$$G:L^p(Q) \to L^p(Q)\,,$$
which for  any $v \in L^p(Q)$ gives  the weak solution $w$ to \eqref{pbs}.

We take $w$ as test function in the weak formulation of \eqref{pbs} obtaining
\begin{equation}
\displaystyle \int_{0}^{T} \langle w_t, w \rangle + \int_{Q}a(x,t,\nabla w)\cdot \nabla w = \int_{Q}h_n(v)f_nw + \int_{Q}\mu_nw.
\label{sc0}
\end{equation} 
By \eqref{cara1} and by classical integration by parts formula, one has 
$$\displaystyle \frac{1}{2}\int_{\Omega}w^2(x,T) - \frac{1}{2}\int_{\Omega}u_{0_n}^2(x) + \alpha\int_{Q}|\nabla w|^p \le \int_{0}^{T} \langle w_t, w \rangle + \int_{Q}a(x,t,\nabla w)\cdot \nabla w.$$
For the right hand side of \eqref{sc0} by  the H\"{o}lder inequality
$$ \displaystyle \int_{Q}h_n(v)f_nw + \int_{Q}\mu_nw \le Cn^2 \left(\int_{Q}|w|^p\right)^\frac{1}{p} + C||\mu_n||_{L^\infty(Q)}\left(\int_{Q}|w|^p\right)^\frac{1}{p},$$
and so, dropping a positive term
\begin{equation}
\displaystyle \alpha\int_{Q}|\nabla w|^p \le C\left(n^2 + ||\mu_n||_{L^\infty(Q)}\right) \left(\int_{Q}|w|^p\right)^\frac{1}{p} + \frac{1}{2}\int_{\Omega}u_{0_n}^2.
\label{sc1}
\end{equation}
Poincar\'e inequality implies for some constant $C$
$$ \displaystyle \int_{Q}|w|^p \le C\left(n^2 + ||\mu_n||_{L^\infty(Q)}\right) \left(\int_{Q}|w|^p\right)^\frac{1}{p} + \frac{C}{2}\int_{\Omega}u_{0_n}^2,$$
which implies
\begin{equation} \label{star}\displaystyle \left(\int_{Q}|w|^p\right)^\frac{1}{p} \le C.\end{equation}
The constant $C$ is independent of $v$, and so the ball  $B:=B_C (0)$ of  $L^p(Q)$ of radius $C$  is invariant for the map $G$.

 Now we  check the continuity of the map $G$. Let $v_k$ be a sequence of functions  converging to $v$ in $L^p(Q)$.
 
 By the dominated convergence theorem one has that $\displaystyle h_n(v_k)f_n +\mu_n$ converges to $\displaystyle h_n(v)f_n+\mu_n$ in $L^p(Q)$. 
Hence, by uniqueness, one deduces that $w_k:=G(v_k)$ converges to $w:=G(v)$ in $L^p(Q)$. 

 Lastly we need $G(B)$ to be relatively compact.  Let $v_k$ be a bounded sequence, and let $w_k=G(v_k)$.
Reasoning as to obtain \eqref{sc1}, we have
$$\displaystyle \alpha\int_{Q}|\nabla w_k|^p \le C\left(n^2 + ||\mu_n||_{L^\infty(Q)}\right) \left(\int_{Q}|w_k|^p\right)^\frac{1}{p} + \frac{1}{2}\int_{\Omega}u_{0_n}^2,$$
where C is clearly independent from $v_k$.
Recalling \eqref{star} this means that $w_k$ is bounded with respect to $k$ in $L^p(0,T;W^{1,p}_0(\Omega))$. We also deduce from the equation  that $(w_{k})_t$ is bounded with respect to $k$ in $L^{p'}(0,T;W^{-1,p'}(\Omega))$. Hence $w_k$ admits a strongly convergent subsequence in $L^p(Q)$ (see \cite{simon}). This concludes the proof.
\end{proof}
In the next lemma we collect the basic  estimates on $u_n$ which will allow us  to pass to the limit in the  approximating formulation  \eqref{pbapprox}. 
 \begin{lemma}\label{stimaloc}
 		Let $ f\in L^1(Q), \mu \in \mathcal{M}(Q), u_0 \in L^1 (\Omega)$ be nonnegative, let $a$ satisfy \eqref{cara1}, \eqref{cara2} and \eqref{cara3} and let $h$ satisfy \eqref{h1}. Then the sequence $u_n$ of solutions of  \eqref{pbapprox} is bounded in $L^\infty(0,T; L^1(\Omega))$ and in $L^q(0,T; W^{1,q}_{\rm loc}(\Omega))$ with $q<p-\frac{N}{N+1}$. Moreover one has
 		\begin{equation}\label{stimatk}
 \int_{Q}|\nabla T_k(u_n)|^p\varphi^p\le Ck, \ \ \forall k>0,
 		\end{equation}
 		for every nonnegative $\varphi\in C^1_c(\Omega\times[0,T))$.
 \end{lemma}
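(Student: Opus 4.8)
The idea is to extract from the approximate equation \eqref{pbapprox} the three announced bounds by testing with suitable truncations and passing to auxiliary quantities, following the classical scheme for parabolic measure-data problems but being careful with the singular term $h_n(u_n)f_n$. First I would control the source term: since $h_n(u_n)f_n \ge 0$ and $\mu_n$ is bounded in $L^1(Q)$, testing with $V_\delta(u_n)$ (or directly with $1$, formally) and using $h(0)\ne 0$ together with the structure of $h$ is \emph{not} what gives the $L^\infty(L^1)$ bound; rather, the key observation is that one does not need to bound $\int_Q h_n(u_n)f_n$ by itself. Instead, test \eqref{pbapprox} with $T_1(u_n)$ (admissible by Lemma 2.5): integrating by parts in time gives $\int_\Omega \tilde{T}_{1}(u_n)(t) + \alpha\int_{Q_t}|\nabla T_1(u_n)|^p \le \int_\Omega \tilde T_1(u_{0_n}) + \int_{Q}h_n(u_n)f_n T_1(u_n) + \int_Q T_1(u_n)\,d\mu_n$. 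Since $0\le T_1(u_n)\le 1$, the right-hand side is bounded by $\|u_0\|_{L^1(\Omega)} + \sup_n \|\mu_n\|_{L^1(Q)} + \int_Q h_n(u_n)f_n$. Thus the missing ingredient really is an a priori bound on $\int_Q h_n(u_n)f_n$; I would obtain it by splitting $\{u_n\le s_0\}$ and $\{u_n>s_0\}$. On $\{u_n>s_0\}$, $h$ is bounded, so $h_n(u_n)f_n\le \|h\|_{L^\infty([s_0,\infty))}f_n\le C f$, integrable uniformly. On $\{u_n\le s_0\}$ one uses \eqref{h1}: $h_n(u_n)f_n\le C u_n^{-\gamma}f\,\chi_{\{u_n\le s_0\}}$, which is \emph{not} obviously bounded; here one must use that $u_n$ is bounded away from $0$ on the support of test functions only via a comparison with the solution of the problem without the singular term — concretely, $u_n\ge z_n$ where $z_n$ solves the same equation with datum $\mu_n$ only (by the comparison principle, since $h_n(u_n)f_n\ge 0$), and then a local estimate on $z_n$ (e.g. a Harnack-type lower bound, or simply that $z_n$ is locally bounded below by a positive constant on compact subsets of $Q$ when $\mu_n\ge0$ is nontrivial, or $z_n\ge c(u_0)>0$ if $u_0$ is locally bounded below) controls the singular term on compact sets. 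For the \emph{global} $L^1(Q)$-bound of $h_n(u_n)f_n$ one instead integrates the equation against $\varphi\in C^1_c$ and observes that $\int_Q h_n(u_n)f_n\varphi$ stays bounded because all other terms do; the genuine difficulty, and the heart of this lemma, is precisely making $\int_Q h_n(u_n)f_n$ bounded \emph{independently of $n$}, which is why I expect this to be the main obstacle.

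Granting the bound $\int_Q h_n(u_n)f_n\le C$, the $L^\infty(0,T;L^1(\Omega))$ estimate follows: test with $T_1(u_n)$ as above but keeping the time variable, i.e. for each $t$ use $\int_\Omega \tilde T_1(u_n)(t)\le C$; since $\tilde T_1(s)\ge \tfrac12\min(s,1)^2 + (s-1)^+ \ge c\min(s,s^2)$... more cleanly, one uses that $\tilde T_{1}(s)\ge s - \tfrac12$ for $s\ge 1$ and $\ge \tfrac12 s^2$ for $s\le 1$, hence $\int_{\{u_n(t)\ge1\}}u_n(t)\le C$, while $\int_{\{u_n(t)<1\}}u_n(t)\le|\Omega|$, giving $\|u_n(t)\|_{L^1(\Omega)}\le C$ uniformly in $t$ and $n$.

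Next, the gradient estimates. For \eqref{stimatk}, fix $k>0$ and $\varphi\in C^1_c(\Omega\times[0,T))$ nonnegative; test \eqref{pbapprox} with $T_k(u_n)\varphi^p$. The time term $\int_Q (u_n)_t T_k(u_n)\varphi^p$ is handled by writing it as $\int_Q \partial_t\big(\tilde T_k(u_n)\big)\varphi^p = -p\int_Q \tilde T_k(u_n)\varphi^{p-1}\varphi_t - \int_\Omega \tilde T_k(u_{0_n})\varphi^p(x,0)$, both bounded by $Ck$ using $0\le\tilde T_k(s)\le ks$ and the $L^\infty(L^1)$ bound just proved. The diffusion term splits as $\int_Q a(x,t,\nabla u_n)\cdot\nabla T_k(u_n)\varphi^p + p\int_Q a(x,t,\nabla u_n)\cdot\nabla\varphi\,\varphi^{p-1}T_k(u_n)$; the first is $\ge \alpha\int_Q|\nabla T_k(u_n)|^p\varphi^p$ by \eqref{cara1}, and the second is estimated via \eqref{cara2} by $Cp\int_Q|\nabla u_n|^{p-1}|\nabla\varphi|\varphi^{p-1}T_k(u_n)$; since $|\nabla u_n|\,\chi_{\{u_n<k\}}=|\nabla T_k(u_n)|$ on that set and one only gets a contribution where, hmm — more carefully one writes $|\nabla u_n|^{p-1}\varphi^{p-1} = (|\nabla T_k(u_n)|^{p-1}\varphi^{p-1})\chi_{\{u_n\le k\}} + (|\nabla u_n|^{p-1}\varphi^{p-1})\chi_{\{u_n>k\}}$ and absorbs the first piece into $\alpha\int|\nabla T_k(u_n)|^p\varphi^p$ by Young's inequality (up to a constant times $\int|\nabla\varphi|^p T_k(u_n)^p$, bounded by $Ck^p\le Ck$ for, say, $k$ in a bounded range, or more honestly by $Ck\int_Q k|\nabla\varphi|^p$ which is $\le Ck$), while the second piece requires first a separate bound on $\int_Q|\nabla G_k(u_n)|^p\varphi^p$ — one runs the same argument with $G_k$ and the source term controlled on $\{u_n>k\}$ by $\int_Q h_n(u_n)f_n + \mu_n(Q)\le C$, which does \emph{not} carry a factor $k$ but is still finite, and then closes the system. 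The RHS source and measure terms in the $T_k$ test are $\le \big(\int_Q h_n(u_n)f_n + \mu_n(Q)\big)\cdot k\|\varphi\|_\infty^p\le Ck$. Collecting everything proves \eqref{stimatk}.

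Finally, the global $L^q(0,T;W^{1,q}_{\rm loc}(\Omega))$ bound for $q<p-\frac{N}{N+1}$ follows from \eqref{stimatk} and the $L^\infty(L^1)$ bound by the standard interpolation argument of Boccardo–Gallouët type adapted to the parabolic setting: for a fixed cutoff $\varphi$ with $\varphi\equiv1$ on a compact $K$, one writes $\int_{0}^T\int_K|\nabla u_n|^q = \int\int_K|\nabla u_n|^q\chi_{\{u_n\le k\}} + \int\int_K|\nabla u_n|^q\chi_{\{u_n>k\}}$; the first is bounded via Hölder by $\big(\int|\nabla T_k(u_n)|^p\varphi^p\big)^{q/p}|Q|^{1-q/p}\le C k^{q/p}$, and the second is controlled by $\sum_j$ over dyadic levels $k=2^j$ using $\int|\nabla G_{2^j}(u_n)|^p\varphi^p\le C2^j$ (no, $\le C$) together with the measure estimate $|\{u_n>2^j\}|\le C 2^{-j(\text{something})}$ coming from the $L^\infty(L^1)\cap L^{p}(W^{1,p})$-type parabolic embedding (the function $T_k(u_n)$ lies in $L^p(0,T;W^{1,p}_{\rm loc})\cap L^\infty(L^1)$, hence in $L^{p\frac{N+2}{N}}_{\rm loc}$ by parabolic Gagliardo–Nirenberg, giving decay of the level sets); optimizing the split in $k$ yields exactly the exponent $q<p-\frac{N}{N+1}$. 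I expect this last computation to be routine once the two building blocks \eqref{stimatk} and $\|u_n\|_{L^\infty(L^1)}\le C$ are in hand; the only genuinely delicate point of the whole lemma remains, as noted, the uniform-in-$n$ control of the singular source $\int_Q h_n(u_n)f_n$, for which the sign of $\mu_n$ and a comparison argument keeping $u_n$ away from zero on compacta are essential.
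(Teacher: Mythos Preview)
Your plan has a genuine gap: the whole argument hinges on a uniform bound $\int_Q h_n(u_n)f_n\le C$, which you try to obtain by comparing $u_n$ with a subsolution $z_n$ and invoking a Harnack-type positivity for $z_n$. Under the stated hypotheses ($u_0$ merely in $L^1(\Omega)$, possibly vanishing on sets of positive measure; $\mu$ possibly zero) no such lower bound on $u_n$ is available, and the paper never proves nor uses one. In fact the paper establishes only a \emph{local} $L^1$ bound on $h_n(u_n)f_n$ (Corollary \ref{stimaL1}), and that corollary is deduced \emph{from} Lemma \ref{stimaloc}, not before it. So your logical order is inverted and the step you flag as ``the only genuinely delicate point'' cannot be closed the way you propose.

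The paper avoids the issue entirely with two choices of test function that make the singular right-hand side harmless:
\begin{itemize}
\item For the $L^\infty(0,T;L^1(\Omega))$ bound, test with $T_1^{\sigma}(u_n)$ where $\sigma=\max(1,\gamma)$. On $\{u_n\le s_0\}$ one has $h_n(u_n)f_nT_1^{\sigma}(u_n)\le C u_n^{-\gamma}f_n\,u_n^{\sigma}\le C f_n$ because $\sigma\ge\gamma$; on $\{u_n>s_0\}$ the factor $h_n(u_n)$ is bounded. Then $\tilde T_{1,\sigma}(s)\ge s-1$ gives the $L^1$ bound in space, uniformly in $t$. Your choice $T_1(u_n)$ gives instead $u_n^{1-\gamma}f_n$, which is uncontrolled when $\gamma>1$.
\item For \eqref{stimatk}, test with $(T_k(u_n)-k)\varphi^p$, which is \emph{nonpositive}. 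Since $h_n(u_n)f_n\ge0$ and $\mu_n\ge0$, the entire right-hand side is $\le0$ and can be dropped; no bound on $\int_Q h_n(u_n)f_n$ is needed. Moreover $(T_k(u_n)-k)$ vanishes on $\{u_n\ge k\}$, so the cross term $a(x,t,\nabla u_n)\cdot\nabla\varphi\,\varphi^{p-1}(T_k(u_n)-k)$ involves only $|\nabla T_k(u_n)|^{p-1}$ and is absorbed by Young's inequality---you do not need the auxiliary $G_k$ estimate you were reaching for. The strip estimate needed for the $L^q(W^{1,q}_{\rm loc})$ bound is obtained the same way, testing with $(T_1(G_k(u_n))-1)\varphi^p\le0$.
\end{itemize}
Once these two building blocks are in place, your description of the Boccardo--Gallou\"et interpolation is fine.
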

 \begin{proof}
   	For a fixed $t\in (0,T]$ we take $\displaystyle T_1^\sigma(u_n)$ as a test function in the formulation of $u_n$ on $Q_t$ (recall that $\sigma:=\max(1,\gamma)$);   one has
   	\begin{equation*}\label{stimeloc1}
   	\begin{aligned}
   		\displaystyle &\int_{0}^{t} \langle(u_n)_t, T_1^\sigma(u_n)\rangle + \alpha\sigma\int_{Q_t}|\nabla T_1(u_n)|^p T_1(u_n)^{\sigma-1} \le  \\
   		 &\int_{Q_t\cap\{u_n \le s_0\}}f_n T_1^{\sigma-1}(u_n) +  
   		 \int_{Q_t\cap\{u_n > s_0\}}h_n(u_n)f_n T_1^{\sigma}(u_n) + \int_{Q_t}\mu_n T_1^{\sigma}(u_n),   		
   	\end{aligned}	
   	\end{equation*}
   	and then
  	\begin{equation*}\label{stimeloc2}
	\begin{aligned}
   		\displaystyle \int_{Q_t} (\tilde{T}_{1,\sigma}(u_n))_t \le 	||f||_{L^1(Q)}\left(1+\sup_{s\in[s_0,+\infty)} h(s)\right) + ||\mu_n||_{L^1(\Omega)},
   	\end{aligned}	
\end{equation*}
where $\tilde{T}_{1,\sigma}(s)$ is defined in \eqref{Ttilde}. Furthermore, observing that $\tilde{T}_{1,\sigma}(s)\ge s-1$, one gets  
   	\begin{equation}
   		\displaystyle \int_{\Omega} u_n(x,t) \le ||f||_{L^1(Q)}\left(1+\sup_{s\in[s_0,+\infty)} h(s)\right) + ||\mu_n||_{L^1(Q)} + |\Omega| + \int_{\Omega} \tilde{T}_{1,\sigma}(u_0).
   		\label{stimeloc5}
   	\end{equation}
   	Every term on the right hand side of \eqref{stimeloc5} is also bounded with respect to $t$; hence taking the supremum on $t$ we obtain that 
   	 \begin{gather}
   	 \displaystyle ||u_n||_{L^\infty(0,T;L^1(\Omega))}\le C.
   	 \label{stimapriori3}
   	 \end{gather}
   Now let $\varphi\in C^1_c(\Omega\times[0,T))$ be nonnegative and take $\displaystyle (T_k(u_n)-k)\varphi^p$ as a test function in \eqref{pbapprox}
   \begin{equation*}
   \displaystyle \int_{0}^{T} \langle (u_n)_t, (T_k(u_n)-k)\varphi^p\rangle + \int_{Q}|\nabla T_k(u_n)|^p\varphi^p  + p \int_{Q} a(x,t,\nabla u_n)\cdot \nabla \varphi \varphi^{p-1} (T_k(u_n)-k) \le 0,
   \end{equation*}	 
   which gives 
      \begin{equation*}
   \displaystyle \int_{Q}|\nabla T_k(u_n)|^p\varphi^p \le \epsilon\int_{Q}|\nabla T_k(u_n)|^p\varphi^p + C_\epsilon kp \int_{Q} |\nabla\varphi|^p + C{k},
   \end{equation*}	
  where we also used that  
   $$\left|\int_{0}^T \langle(u_n)_t, (T_k(u_n)-k)\varphi^p \rangle\right|\leq \left|\int_{\Omega} (\tilde{T}_{k,1} (u_{0_n}) -k u_{0_{n}})\varphi^p(x,0)\right| + p\left|\int_Q (\tilde{T}_{k,1} (u_{n}) -k u_{n})\varphi^{p-1}\varphi_t\right| \le Ck.$$ 
   Hence one  has that 
   $$\int_{Q}|\nabla T_k(u_n)|^p\varphi^p\le Ck,$$
   where the constant $C$ does not depend on $n$. By taking  $\displaystyle (T_1(G_k(u_n))-1)\varphi^p$ as a test function in \eqref{pbapprox} where $\varphi\in C^1_c(\Omega\times[0,T))$ one may also deduce 
   \begin{equation}\label{stimastrisce}
   	  \int_{Q\cap \{k<u_n<k+1\}}|\nabla u_n|^p\varphi^p\le C.
   \end{equation}
   From \eqref{stimapriori3} and \eqref{stimastrisce} one can reason as in  \cite[Theorem $4$]{bg} in order to deduce that $u_n$ is bounded in $L^q(0,T; W^{1,q}_{\rm loc}(\Omega))$ with $q<p-\frac{N}{N+1}$.
   \end{proof}
The estimates on $u_n$ imply that the (possibly) singular term is locally bounded in $L^1(Q)$.  In fact, we have 
\begin{corollary}\label{stimaL1}
	Under the assumptions of Lemma \ref{stimaloc} one has that 
	\begin{equation}\label{limitatoL1}
		\int_Q h_n(u_n)f_n\varphi \le C,
	\end{equation}
 		for every nonnegative $\varphi\in C^1_c(\Omega\times [0,T))$ with  $C$ not depending on $n$.
\end{corollary}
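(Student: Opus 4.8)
The plan is to test the approximating equation \eqref{pbapprox} with a cut-off of the type $V_\delta(u_n)\varphi$ or, more simply, with $\varphi$ itself combined with a careful splitting of the singular region $\{u_n \le s_0\}$ from the region $\{u_n > s_0\}$ where $h$ is bounded. The key observation is that on $\{u_n > s_0\}$ we have $h_n(u_n) \le \sup_{s\in[s_0,\infty)} h(s)$, so that $\int_{Q\cap\{u_n>s_0\}} h_n(u_n)f_n\varphi \le \|\varphi\|_\infty (\sup_{[s_0,\infty)} h)\|f\|_{L^1(Q)} \le C$ with no effort at all; the whole difficulty is concentrated in the region $\{u_n \le s_0\}$, where $h_n(u_n)f_n$ can genuinely blow up.

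For that region, the idea is to exploit assumption \eqref{h1}, namely $h(s) \le Cs^{-\gamma}$ for $s\le s_0$, and to absorb the negative power of $u_n$ against a suitable power of a truncation already controlled by Lemma \ref{stimaloc}. Concretely, one tests \eqref{pbapprox} with $T_{s_0}^\sigma(u_n)\varphi / s_0^{\sigma-1}$ (or a close variant), where $\sigma = \max(1,\gamma)$; on the singular set one then has $h_n(u_n)f_n T_{s_0}^\sigma(u_n) \le C f_n u_n^{\sigma - \gamma} \le C f_n s_0^{\sigma-\gamma}$ since $\sigma \ge \gamma$, so the right-hand side is bounded by $C\|f\|_{L^1(Q)}$. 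The parabolic term $\int_0^T\langle (u_n)_t, T_{s_0}^\sigma(u_n)\varphi\rangle$ is handled exactly as in the proof of Lemma \ref{stimaloc}, integrating by parts to produce $\tilde T_{s_0,\sigma}(u_n)$ and using the bounds on $u_{0_n}$ and on $\|u_n\|_{L^\infty(0,T;L^1(\Omega))}$ together with $\varphi\in C^1_c$, while the elliptic term $\int_Q a(x,t,\nabla u_n)\cdot\nabla\varphi\, T_{s_0}^\sigma(u_n)$ is controlled by \eqref{stimatk} with $k=s_0$ via Young's inequality (here the local gradient bound is what makes the diffusion term harmless). Collecting these, one isolates $\int_{Q\cap\{u_n\le s_0\}} h_n(u_n)f_n\varphi$ — up to the harmless factor $s_0^{\sigma-1}$ coming from $T_{s_0}^\sigma(u_n)\ge s_0^{\sigma-1}T_{s_0}(u_n)$ and the fact that on the singular set $T_{s_0}(u_n)=u_n$ is not bounded below, which forces one to instead directly estimate $h_n(u_n)f_n T_{s_0}^\sigma(u_n)$ pointwise as above rather than bounding $\varphi$ by $T_{s_0}^\sigma(u_n)$.

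The main obstacle is the last point: one cannot literally dominate $\varphi$ by $T_{s_0}^\sigma(u_n)$ since $u_n$ may vanish, so the clean conclusion is obtained by keeping the test function $T_{s_0}^\sigma(u_n)\varphi$ and recognising that the quantity that naturally appears on the right, $\int_{Q} h_n(u_n)f_n T_{s_0}^\sigma(u_n)\varphi$, already contains $\int_{Q\cap\{u_n\le s_0\}} h_n(u_n)f_n u_n^\sigma\varphi$, which by \eqref{h1} and $\sigma\ge\gamma$ is bounded by $C\|f\|_{L^1(Q)}\|\varphi\|_\infty$. But to get the stated bound on $\int_Q h_n(u_n)f_n\varphi$ itself one argues as follows: split again into $\{u_n \le \tau\}$ and $\{u_n > \tau\}$ for small fixed $\tau\le s_0$; on $\{u_n>\tau\}$ use $h_n(u_n)\le C\tau^{-\gamma}$ (or the sup over $[\tau,\infty)$), giving a bound $C_\tau\|f\|_{L^1(Q)}\|\varphi\|_\infty$, while the remaining piece $\{u_n\le\tau\}$ is estimated by the energy argument just described (testing with $\tilde T_{\tau,\sigma}'(u_n)\varphi$ type functions), the constant being uniform in $n$ though dependent on $\tau$ and the data. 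Since \eqref{limitatoL1} only asserts a bound with $C$ independent of $n$, any such $\tau$ (e.g. $\tau = s_0$) suffices, and the corollary follows. The estimate \eqref{limitatoL1} will then be used, together with the narrow convergence of $\mu_n$ and the local $W^{1,q}$ bounds, to pass to the limit in the distributional formulation.
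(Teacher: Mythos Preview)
Your proposal overshoots by a wide margin and, in the route you actually develop, has a gap. The paper's proof is a one-line duality argument: test \eqref{pbapprox} directly with the nonnegative $\varphi\in C^1_c(\Omega\times[0,T))$. The singular term then sits alone on one side,
\[
\int_Q h_n(u_n)f_n\varphi \;\le\; -\int_Q u_n\varphi_t - \int_\Omega u_{0_n}\varphi(x,0) + \beta\int_{\operatorname{supp}\varphi}|\nabla u_n|^{p-1}|\nabla\varphi|,
\]
(the $\mu_n$ term is dropped by sign), and every piece on the right is already controlled by Lemma~\ref{stimaloc}: the first two by the $L^\infty(0,T;L^1(\Omega))$ bound, the third by the local $L^q$ gradient bound with some $q>p-1$. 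No splitting into $\{u_n\le s_0\}$ and $\{u_n>s_0\}$ is needed, because the equation itself transfers the whole singular mass onto terms that are already known to be bounded.

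You mention this option (``more simply, with $\varphi$ itself'') but then abandon it in favour of testing with $T_{s_0}^\sigma(u_n)\varphi$. That detour does not close: what your energy argument controls is $\int_Q h_n(u_n)f_n\,T_{s_0}^\sigma(u_n)\varphi$, i.e.\ $\int_{\{u_n\le s_0\}} h_n(u_n)f_n\,u_n^\sigma\varphi$, and since $u_n^\sigma$ can vanish this gives no bound on $\int_{\{u_n\le\tau\}} h_n(u_n)f_n\varphi$ for any $\tau>0$. Your final paragraph is therefore circular --- the ``energy argument just described'' does not estimate the remaining piece $\{u_n\le\tau\}$. (Your other suggestion, testing with $V_\delta(u_n)\varphi$, \emph{would} work, since the diffusion contribution from $V_\delta'$ has the right sign; but this is still more machinery than the corollary needs.) The moral: once Lemma~\ref{stimaloc} gives local gradient control, the singular term is bounded for free by testing with the cut-off $\varphi$ alone.
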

\begin{proof}
	Let us take $0\le \varphi\in C^1_c(\Omega\times [0,T))$ as a test function in \eqref{pbapprox} obtaining
	\begin{equation*}
	\begin{aligned}
			\int_Q h_n(u_n)f_n\varphi &\le  \beta\int_Q |\nabla u_n|^{p-1}|\nabla \varphi| 
			&\le  C\int_{{supp (\varphi)}} |\nabla u_n|^q +C,
	\end{aligned}		 
	\end{equation*}
	and the right hand side of the previous is bounded thanks to Lemma \ref{stimaloc}. 
\end{proof}
\begin{lemma}\label{lemmau}
	Under the assumptions of Lemma \ref{stimaloc} there exists $u\in L^q(0,T; W^{1,q}_{\rm loc}(\Omega))$ for any $q<p-\frac{N}{N+1}$ such that, up to a subsequence, $u_n$ converges to $u$ a.e. on $Q$,  weakly in $L^q(0,T; W^{1,q}_{\rm loc}(\Omega))$ and strongly in $L^1(0,T; L^1_{\rm loc}(\Omega))$. Moreover $\nabla u_n$ converges almost everywhere to $\nabla u$ in $Q$. In particular  $a(x,t,\nabla u_n)$  strongly converges to $a(x,t,\nabla u)$  in $L^1(0,T; L^1_{\rm loc}(\Omega))$.  
\end{lemma}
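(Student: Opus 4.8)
The strategy is to deduce everything from the uniform a priori bounds of Lemma \ref{stimaloc} in three steps: first a compactness-in-time argument producing $u$ together with the weak and strong convergences of $u_n$; then a localized monotonicity argument, resting on the strict monotonicity \eqref{cara3}, producing the a.e.\ convergence of the gradients; finally an equi-integrability argument upgrading the latter to the $L^1_{\rm loc}$ convergence of $a(x,t,\nabla u_n)$. I expect only the second step to be genuinely delicate.

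\emph{Compactness.} By Lemma \ref{stimaloc}, $\{u_n\}$ is bounded in $L^q(0,T;W^{1,q}_{\rm loc}(\Omega))$ for every $q<p-\tfrac{N}{N+1}$ and in $L^\infty(0,T;L^1(\Omega))$, so along a subsequence $u_n\rightharpoonup u$ weakly there. To make this strong, fix an open set $\omega$ with $\overline\omega\subset\Omega$ and $\psi\in C^1_c(\Omega)$ with $0\le\psi\le1$ and $\psi\equiv1$ on $\omega$; from \eqref{pbapprox},
$$(u_n\psi)_t=\operatorname{div}\big(\psi\,a(x,t,\nabla u_n)\big)-\nabla\psi\cdot a(x,t,\nabla u_n)+\psi\big(h_n(u_n)f_n+\mu_n\big).$$
Choosing $q\in\big(\max(1,p-1),\,p-\tfrac{N}{N+1}\big)$ — a range which is nonempty precisely because $p>2-\tfrac1{N+1}$ — the growth condition \eqref{cara2} makes $a(x,t,\nabla u_n)$ bounded in $L^{q/(p-1)}_{\rm loc}(Q)$ with $q/(p-1)>1$, hence $\operatorname{div}(\psi\,a(x,t,\nabla u_n))$ is bounded in $L^{q/(p-1)}(0,T;W^{-1,q/(p-1)}(\Omega))$, while $\nabla\psi\cdot a(x,t,\nabla u_n)$ is bounded in $L^1(Q)$ and, by Corollary \ref{stimaL1} and the uniform $L^1$ bound on $\mu_n$, so is $\psi(h_n(u_n)f_n+\mu_n)$. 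Thus $(u_n\psi)_t$ is bounded in $L^1(0,T;W^{-1,s}(\Omega))$ for some $s>1$ and, $u_n\psi$ being bounded in $L^q(0,T;W^{1,q}_0(\Omega))$, the Aubin--Simon lemma (\cite{simon}) gives relative compactness of $\{u_n\psi\}$ in $L^1(Q)$. Exhausting $\Omega$ by such sets $\omega$ and diagonalizing, I obtain, along a further subsequence, $u_n\to u$ strongly in $L^1(0,T;L^1_{\rm loc}(\Omega))$ and a.e.\ in $Q$; in particular $\nabla u_n\rightharpoonup\nabla u$ weakly in $L^q_{\rm loc}(Q)$.

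\emph{A.e.\ convergence of the gradients.} This is the core of the argument. The goal is to prove that for every $k>0$ and every nonnegative $\varphi\in C^1_c(\Omega\times[0,T))$
$$I_{n,k}:=\int_Q\big(a(x,t,\nabla T_k(u_n))-a(x,t,\nabla T_k(u))\big)\cdot\nabla\big(T_k(u_n)-T_k(u)\big)\,\varphi\ \longrightarrow\ 0$$
as $n\to\infty$ — note that $T_k(u)\varphi\in L^p(0,T;W^{1,p}_0(\Omega))$ by \eqref{stimatk} and weak lower semicontinuity, so the integral is well defined. I would test \eqref{pbapprox} with $\big(T_k(u_n)-(T_k(u))_\nu\big)\varphi$, where $(T_k(u))_\nu$ is the Landes--Mustonen time-regularization of $T_k(u)$ (the solution of $\partial_t z_\nu=\nu\,(T_k(u)-z_\nu)$ with $z_\nu(\cdot,0)=T_k(u_0)$), which is an admissible test function by Lemma \ref{exappr}. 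The parabolic term $\int_0^T\langle(u_n)_t,(T_k(u_n)-(T_k(u))_\nu)\varphi\rangle$ is bounded below by a quantity vanishing in the iterated limit (first $n\to\infty$, then $\nu\to\infty$): this is the usual time-regularization estimate, which uses that $(T_k(u))_\nu\to T_k(u)$ in $L^p(0,T;W^{1,p}_{\rm loc}(\Omega))$ and that $\partial_t(T_k(u))_\nu$ has the favourable sign against $T_k(u_n)-(T_k(u))_\nu$. The right-hand side and the terms coming from the flux on $\{u_n>k\}$ are controlled in the same iterated limit using Corollary \ref{stimaL1}, the estimates \eqref{stimatk} and \eqref{stimastrisce}, the growth \eqref{cara2}, the weak convergence of $\nabla u_n$ and the narrow convergence $\mu_n\to\mu$. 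This yields $\limsup_n I_{n,k}\le0$; since \eqref{cara3} makes the integrand nonnegative, $I_{n,k}\to0$, a subsequence of that nonnegative integrand converges to $0$ a.e., and the strict monotonicity \eqref{cara3} then forces $\nabla T_k(u_n)\to\nabla T_k(u)$ a.e.\ on $\{\varphi>0\}$. Since $\varphi$ and $k$ are arbitrary and $u_n\to u$ a.e., we conclude $\nabla u_n\to\nabla u$ a.e.\ in $Q$. The only genuinely delicate point is the treatment of the time-derivative term, forced by the fact that $T_k(u)$ itself is not an admissible test function; alternatively this step can be performed exactly as in the parabolic $L^1$/measure-data theory of \cite{bg,bdgo,km,bdp,ppp}.

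\emph{Convergence of the flux.} With $q$ as above, \eqref{cara2} gives $|a(x,t,\nabla u_n)|^{q/(p-1)}\le\beta^{q/(p-1)}|\nabla u_n|^q$, which is bounded in $L^1_{\rm loc}(Q)$; hence $\{a(x,t,\nabla u_n)\}$ is bounded in $L^{q/(p-1)}_{\rm loc}(Q)$ with $q/(p-1)>1$ and is locally equi-integrable. Combined with $\nabla u_n\to\nabla u$ a.e.\ and the continuity of $\xi\mapsto a(x,t,\xi)$, the Vitali theorem gives $a(x,t,\nabla u_n)\to a(x,t,\nabla u)$ strongly in $L^1(0,T;L^1_{\rm loc}(\Omega))$ (so in particular $a(x,t,\nabla u)\in L^1(0,T;L^1_{\rm loc}(\Omega))$), which is the remaining claim.
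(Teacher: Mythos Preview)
Your proof is correct and follows essentially the same three-step route as the paper: Aubin--Simon compactness via a localized bound on $(u_n\psi)_t$ in $L^{q/(p-1)}(0,T;W^{-1,q/(p-1)}(\Omega))+L^1(Q)$, then a.e.\ convergence of gradients by monotonicity, then Vitali for the flux. The only difference is one of presentation: where the paper simply invokes \cite[Theorem~4.1]{bm} and \cite[Theorem~3.3]{bdgo} for the gradient step, you sketch the underlying Landes--Mustonen/monotonicity argument explicitly --- which is precisely what those references contain in the parabolic $L^1$/measure-data setting.
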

   \begin{proof}
   	From Lemma \ref{stimaloc} we know that $u_{n}$ is bounded in $L^q(0,T; W^{1,q}_{\rm loc}(\Omega))$ with $q<p-\frac{N}{N+1}$. Moreover from Corollary \ref{stimaL1} one has that the right hand side of \eqref{pbapprox} is bounded in $L^1(0,T; L^1_{\rm loc}(\Omega))$. Hence, let $\varphi \in C^1_c(\Omega)$ then one has that $(u_n\varphi)_t$ is bounded in $L^{s}(0,T;W^{-1,s}(\Omega)) + L^1(Q)$ with $s=\frac{q}{p-1}$, 
   which is sufficient to apply   \cite[Corollary $4$]{simon} in order to deduce that $u_n$ converges to a function $u$ in $L^1(0,T; L^1_{\rm loc}(\Omega))$. Furthermore, since the right hand side of \eqref{pbapprox} is bounded in $L^1(0,T; L^1_{\rm loc}(\Omega))$ and $u_n$ is bounded in $L^q(0,T; W^{1,q}_{\rm loc}(\Omega))$, one can reason as in the proof of   Theorem $4.1$ of \cite{bm} (see also \cite[Theorem 3.3]{bdgo}) deducing that $\nabla u_n$ converges to $\nabla u$ almost everywhere in $Q$. The last sentence of the statement  can be  checked by using  \eqref{cara2} and  Vitali's theorem  in order to show that  $|\nabla u_n|^{p-1}$ is strongly compact in  $L^1(0,T; L^1_{\rm loc}(\Omega))$. 
  \end{proof}
\subsection{Passing to the limit} \label{pass}
Here, using  the results obtained in Section \ref{approx_sec},  we pass to the limit in \eqref{pbapprox} in order to prove Theorem \ref{theo1}. 
\begin{proof}[Proof of Theorem \ref{theo1}]
We want to show that $u$, i.e. the almost everywhere limit of $u_n$ found in Lemma \ref{lemmau}, is  a solution to \eqref{pb}. We first want to obtain \eqref{weakdef3} by passing to the limit in 
\begin{equation*}\label{solapprox} 
\displaystyle -\int_{Q} u_n\varphi_t  - \int_{\Omega} u_{0_n}\varphi(x,0) + \int_{Q}a(x,t,\nabla u_n) \cdot \nabla \varphi  =\int_{Q} h_n(u_n)f_n\varphi  +\int_{Q} \mu_n \varphi,
\end{equation*}
where $\varphi \in C^1_c(\Omega\times [0,T))$. 
Since $u_n$ is strongly compact in $L^1(0,T; L^1_{\rm loc}(\Omega))$ and using the definition of $u_{0_n}$,  we easily  pass to the limit in the first two terms of the previous equality. We can also pass to the limit in  the the third term by using Lemma \ref{lemmau}. By the definition of  $\mu_n$ we also  pass to the limit in the last  term.

We are left to pass to the limit in the nonlinear lower order  term involving $h$. If $h(0)<\infty$ we use Lebesgue's dominated convergence theorem  and we easily pass $n$ to the limit.

 Hence,  assume $h(0)=\infty$. Let  us take $\varphi\ge 0$  (a  standard density argument will allow to deduce the result also for sign changing test functions). For $\delta >0$  we split the term  as 
\begin{equation}\label{rhs}
\int_{Q} h_n(u_n)f_n\varphi = \int_{Q\cap \{u_n\le \delta\}}h_n(u_n)f_n\varphi + \int_{Q\cap \{u_n> \delta\}}h_n(u_n)f_n\varphi,
\end{equation}
Without losing generality we may assume the parameter $\delta$ running outside   the set $ \{\eta: |\{u(x,t)=\eta \}|>0\}$ which is at most a countable.
The second term in \eqref{rhs} passes to the limit  again  by the Lebesgue theorem as 
$$h_n(u_n)f_n\varphi\chi_{\{u_n> \delta\}} \le \sup_{s\in [\delta,\infty)}[h(s)]\ f\varphi \in L^1(Q),$$
namely
\begin{equation*}\label{rhs2}
\lim_{n\to \infty}\int_{Q\cap \{u_n> \delta\}}h_n(u_n)f_n\varphi= \int_{Q\cap \{u> \delta\}}h(u)f\varphi.
\end{equation*}
 Let us observe that using the Fatou lemma and Corollary  \ref{stimaL1} imply $h(u)f\in L^1(0,T;L^1_{\rm loc}(\Omega))$.
This allows to apply once again the Lebesgue theorem as $\delta \to 0$ obtaining
\begin{equation*}\label{rhs21}
\lim_{\delta\to 0}\lim_{n\to \infty}\int_{Q\cap \{u_n> \delta\}}h_n(u_n)f_n\varphi= \int_{Q\cap\{u> 0\}}h(u)f\varphi.
\end{equation*}	
We also observe that $h(u)f\in L^1(0,T;L^1_{\rm loc}(\Omega))$ gives that the set  $\{u=0\}$ is contained in the set $\{f=0\}$ up to a set of zero Lebesgue measure. This means that 
$$\int_{Q\cap\{u> 0\}}h(u)f\varphi=\int_{Q}h(u)f\varphi,$$
and then the proof is done once we have shown that the first term in the right hand side of  \eqref{rhs} converges to zero as, resp.,   $n\to\infty$ and as $\delta \to 0$.
To this aim we take $V_\delta(u_n)\varphi$ ($V_\delta$ is defined in \eqref{Vdelta}) as test function in \eqref{pbapprox}. Dropping a negative term, one has 
\begin{equation*}\label{limn1}
\begin{aligned}
\int_{Q\cap \{u_n\le \delta\}}h_n(u_n)f_n\varphi &\le \int_0^T \langle(u_n)_t,V_\delta(u_n)\varphi\rangle + \int_{Q}|\nabla u_n|^{p-1}|\nabla \varphi| V_{\delta}(u_n)
\\
& \le -\int_\Omega \Phi_\delta (u_n)\varphi_t + \int_{Q}|\nabla u_n|^{p-1}|\nabla \varphi| V_{\delta}(u_n),
\end{aligned}
\end{equation*}
where $\Phi_\delta (s) = \int_0^s V_\delta(t) \ dt$. Furthermore, from the fact that $\int_\Omega \Phi_\delta (u_n) \le \delta$ and from \eqref{stimatk}, one is able to deduce that
\begin{equation*}
\begin{aligned}
 \limsup_{n\to\infty}\int_{Q\cap \{u_n\le \delta\}}h_n(u_n)f_n\varphi &\le C\delta^{\frac{1}{p'}},
\end{aligned}
\end{equation*}   
which implies that
\begin{equation*}
\begin{aligned}
\lim_{\delta\to 0}\limsup_{n\to \infty}\int_{Q\cap \{u_n\le \delta\}}h_n(u_n)f_n\varphi =0.
\end{aligned}
\end{equation*}  
Hence, we deduce that 
$$\lim_{n\to \infty}\int_{Q}h_n(u_n)f_n\varphi = \int_{Q}h(u)f\varphi,$$
for every $\varphi\in C^1_c(\Omega\times [0,T))$ and this proves that $u$ satisfies \eqref{weakdef3}. 

\medskip

In order to conclude the proof of Theorem \ref{theo1} we show that the boundary condition holds; namely $u$ satisfies \eqref{weakdef2}. By taking $T_k^\sigma(u_n)$ ($k>0$) as a test function in \eqref{pbapprox} one can  deduce that $T_k^{\frac{\sigma -1+p}{p}}(u_n)$ is bounded in $L^p(0,T; W^{1,p}_0(\Omega))$ with respect to $n$. Hence one  has that, for almost every $t\in (0,T)$ and for every $k>0$, $T_k^{\frac{\sigma -1+p}{p}}(u(x,t)) \in W^{1,p}_0(\Omega)$. If $\gamma\le1$ then $T_k(u(x,t)) \in W^{1,p}_0(\Omega)$ is known to imply \eqref{weakdef2}. Otherwise, if $\gamma>1$, one reasons as follows
\begin{equation*}
\begin{aligned}
\frac{1}{\epsilon} \int_{\Omega_\epsilon} T_k(u) \le \frac{1}{\epsilon} \left(\int_{\Omega_\epsilon} T_k^{\frac{\gamma -1+p}{p}}(u)\right)^{\frac{p}{\gamma-1+p}} |\Omega_\epsilon|^{\frac{\gamma-1}{\gamma-1+p}} = \left(\frac{|\Omega_\epsilon|}{\epsilon}\right)^{\frac{\gamma-1}{\gamma-1+p}}\left( \frac{1}{\epsilon}\int_{\Omega_\epsilon} T_k^{\frac{\gamma -1+p}{p}}(u)\right)^{\frac{p}{\gamma-1+p}},
\end{aligned}
\end{equation*}
and taking $\epsilon\to 0$ in the previous one has that \eqref{weakdef2} holds also  in this case. This concludes the proof.
\end{proof}
\subsection{Some comments on the regularity of the solutions}\label{comment}
In this section we show that the scheme of approximation \eqref{pbapprox} can take to a solution $u\in L^p(0,T; W^{1,p}_0(\Omega)) $ under suitable assumptions on the data, and in particular on the nonlinearity $h$ giving rise to some regularizing effects with respect to the purely quasilinear case (i.e. $h\equiv 1$). For a general nonhomogeneous datum $\mu$ then no solution belonging to  the natural energy space are expected, since only truncations of them are  (see \cite{op, op2} for similar considerations in the stationary case).  So that we restrict to the case $\mu\equiv 0$. 

 We assume the following control on $h$ at infinity
\begin{equation}
\exists \theta\ge 0, C,s_1>0: h(s)\le \frac{C}{s^\theta} \ \text{for all } s\ge s_1. 
\label{h2}
\end{equation}    
On the datum $f$ we assume that
\begin{equation}\label{f2}
\begin{aligned}
	&f \in L^{{\frac{p}{p-1+\theta}}}(0,T; L^{\left(\frac{p^*}{1-\theta}\right)'}(\Omega)) \ \ \ &\text{if} \ \theta<1, 
	\\
	&f \in L^{1}(Q) \ \ \ &\text{if} \ \theta\ge1. 	
\end{aligned}
\end{equation}

 \begin{lemma}\label{lemmaregolarita}
	Let $a$ satisfy \eqref{cara1}, \eqref{cara2}, \eqref{cara3}, let $h$ satisfy \eqref{h1} with $\gamma\le1$ and \eqref{h2}, let $f$ nonnegative  satisfy \eqref{f2}; finally let $u_0 \in L^2(Q)$ be  nonnegative. Then the solution to \eqref{pb} (with  $\mu\equiv 0$) found in Theorem \ref{theo1} belongs to $L^p(0,T; W^{1,p}_0(\Omega))$.
 \end{lemma}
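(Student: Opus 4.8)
The plan is to test the approximate equation \eqref{pbapprox} (with $\mu_n$ replaced by $0$) with $u_n$ itself on the cylinder $Q_t$ and derive an energy estimate uniform in $n$. By \eqref{cara1} and the integration by parts formula one obtains
\[
\frac12\int_\Omega u_n^2(x,t)\,dx+\alpha\int_{Q_t}|\nabla u_n|^p\le \frac12\int_\Omega u_{0_n}^2+\int_{Q_t}h_n(u_n)f_n u_n,
\]
so that everything reduces to bounding $\int_{Q}h_n(u_n)f_n u_n$ uniformly in $n$. I would split this integral over $\{u_n\le s_1\}$ and $\{u_n> s_1\}$. On $\{u_n\le s_1\}$ the product $h_n(u_n)u_n\le h_n(u_n) s_1$ need not be bounded if $\gamma\le 1$ and $h$ blows up, but since $\gamma\le 1$ we have $h(s)s\le C s^{1-\gamma}\le C s_1^{1-\gamma}$ for $s\le \min(s_0,s_1)$, hence $h_n(u_n)u_n\le C$ there and this contribution is controlled by $C\|f\|_{L^1(Q)}$. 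On $\{u_n> s_1\}$ we use \eqref{h2}: $h_n(u_n)u_n\le h(u_n)u_n\le C u_n^{1-\theta}$ if $\theta<1$, while if $\theta\ge 1$ then $h(u_n)u_n\le C u_n^{1-\theta}\le C$ is bounded and again the term is controlled by $C\|f\|_{L^1(Q)}$, which finishes the case $\theta\ge1$ directly.

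The substantive case is $\theta<1$, where we must estimate $\int_Q u_n^{1-\theta}f\,$. Here I would combine Hölder in space with the embedding $L^p(0,T;W^{1,p}_0(\Omega))\hookrightarrow L^p(0,T;L^{p^*}(\Omega))$: applying Hölder with exponents $\frac{p^*}{1-\theta}$ and its conjugate $\left(\frac{p^*}{1-\theta}\right)'$ in the $x$ variable gives
\[
\int_\Omega u_n^{1-\theta}f\le \Big(\int_\Omega u_n^{p^*}\Big)^{\frac{1-\theta}{p^*}}\|f(\cdot,t)\|_{L^{\left(\frac{p^*}{1-\theta}\right)'}(\Omega)},
\]
and then Hölder in time with exponents $\frac{p}{p-1+\theta}$ (matched to the assumption \eqref{f2} on $f$) and $\frac{p}{1-\theta}$ yields
\[
\int_Q u_n^{1-\theta}f\le C\|f\|_{L^{\frac{p}{p-1+\theta}}(0,T;L^{(\frac{p^*}{1-\theta})'}(\Omega))}\Big(\int_Q|\nabla u_n|^{p}\Big)^{\frac{1-\theta}{p}},
\]
using Sobolev–Poincaré on $\int_\Omega u_n^{p^*}$. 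Since $\frac{1-\theta}{p}<1$, the right-hand side is sublinear in $\int_Q|\nabla u_n|^p$, so by Young's inequality it can be absorbed into the term $\alpha\int_{Q}|\nabla u_n|^p$ on the left, producing a bound $\int_Q|\nabla u_n|^p\le C$ independent of $n$ (here $u_0\in L^2(Q)\subset L^2(\Omega)$ a.e. is what makes the initial term harmless, modulo noting $u_0\in L^2(\Omega)$ is the relevant reading).

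With this uniform bound in hand, $u_n$ is bounded in $L^p(0,T;W^{1,p}_0(\Omega))$; since by Lemma \ref{lemmau} $u_n\to u$ a.e. and $\nabla u_n\to\nabla u$ a.e., weak lower semicontinuity of the norm (or Fatou applied to $|\nabla u_n|^p$) gives $u\in L^p(0,T;W^{1,p}_0(\Omega))$, and the weak limit must coincide with the distributional solution $u$ already constructed. I expect the main obstacle to be purely bookkeeping: choosing the Hölder exponents so that they simultaneously match the space $L^{\frac{p}{p-1+\theta}}(0,T;L^{(p^*/(1-\theta))'}(\Omega))$ prescribed in \eqref{f2} and leave a power $\frac{1-\theta}{p}<1$ of the gradient energy to absorb — a short computation that needs to be done carefully but hides no real difficulty; a minor secondary point is justifying the integration-by-parts identity for $u_n$, which is legitimate since $u_n\in L^p(0,T;W^{1,p}_0(\Omega))$ with $(u_n)_t\in L^{p'}(0,T;W^{-1,p'}(\Omega))$ by Lemma \ref{exappr}.
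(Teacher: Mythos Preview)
Your proof is correct and follows essentially the same route as the paper: test \eqref{pbapprox} with $u_n$, split the right-hand side over $\{u_n\le s_1\}$ and $\{u_n>s_1\}$, use $\gamma\le 1$ to bound $h(s)s$ on the first piece, and for $\theta<1$ combine H\"older in space, the Sobolev embedding, and an absorption argument to close the estimate. The only cosmetic difference is that the paper applies Young's inequality pointwise in $t$ (and then integrates) rather than first applying H\"older in time as you do; both yield the same bound and the same exponent $\frac{p}{p-1+\theta}$ on $f$.
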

\begin{proof}
	We need to prove that under our  assumptions  we can show better a priori estimates on the sequence $u_n$, i.e. a solution to \eqref{pbapprox}.
	Hence we take $u_n$ as a test function in \eqref{pbapprox} obtaining
	\begin{equation*}
	\begin{aligned}
		\alpha\int_{Q} |\nabla u_n|^p &\le \max_{s\in [0,s_1]} [h(s)s] \int_{Q\cap \{u_n\le s_1\}}f_n  + \int_{Q\cap \{u_n> s_1\}} f_n u_n^{1-\theta} +\frac{1}{2}\int_\Omega u_0^2
		\\
		&\le C + \int_{Q\cap \{u_n> s_1\}} f_n u_n^{1-\theta}
	\end{aligned}
	\end{equation*} 
	and hence if $\theta\ge 1$ the estimate is done since
	$$
	\int_{Q\cap \{u_n> s_1\}} f_n u_n^{1-\theta}\leq \int_{Q\cap \{u_n> s_1\}} f_n s_1^{1-\theta}\leq C\,. 
	$$
	
	  Otherwise we apply the H\"older, the Sobolev (with constant $\mathcal{S}_p$) and the Young inequalities to have 
	\begin{equation*}
\begin{aligned}
\alpha\int_{Q} |\nabla u_n|^p &\le \int_{0}^T ||f||_{L^{\left(\frac{p^*}{1-\theta}\right)'}(\Omega)}||u_n||^{1-\theta}_{L^{p^*}(\Omega)} \le \mathcal{S}_p^{1-\theta}\int_{0}^T ||f||_{L^{\left(\frac{p^*}{1-\theta}\right)'}(\Omega)} \left(\int_\Omega |\nabla u_n|^p\right)^{\frac{1-\theta}{p}}
\\
&\le C_\epsilon \mathcal{S}_p^{1-\theta}\int_{0}^T ||f||^{\frac{p}{p-1+\theta}}_{L^{\left(\frac{p^*}{1-\theta}\right)'}(\Omega)} + \epsilon\mathcal{S}_p^{1-\theta} \int_Q |\nabla u_n|^p, 
\end{aligned}
\end{equation*}	
and taking $\epsilon$ sufficiently small the estimate is closed also if $\theta<1$. Hence  $u_n$ is bounded in $L^p(0,T; W^{1,p}_0(\Omega))$ and so  $u$ belongs to $L^p(0,T; W^{1,p}_0(\Omega))$.
\end{proof}

\begin{remark}
We underline that requiring that $h$ goes to zero as in \eqref{h2} provides a strong regularizing effect on the Sobolev regularity of the solution to \eqref{pb}. Indeed when $h\equiv 1$ and $\mu\equiv 0$ then $u$ is, in general, expected to have finite energy  for a datum  $f\in L^{p'}(0,T; L^{(p^*)'}(\Omega))$  (i.e. $\theta=0$ in \eqref{h2}) (see \cite{lions}), while  for more general (e.g.  merely  integrable) data then $u$ needs not to (see \cite{bg}). In our case, even if $h$ is allowed to  mildly blow up at the origin ($\gamma \le 1$),  we are able to find more regular solution thanks to the regularizing effect given by the vanishing rate of  $h$ at infinity. This improved regularity is a key tool for uniqueness, since, as we will see in the next section, a unique solution can be proven to exist in a suitable subclass of this  {\it energy space}. Finally we underline that the exponent $\left(\frac{p^*}{1-\theta}\right)'$ is the same obtained in \cite{bo} when $p=2$ while studying the regularizing effect of the singular nonlinearity in the stationary case.
\end{remark}

\section{Uniqueness of finite energy solutions}\label{unicas}
In this section we prove uniqueness of finite energy solutions to

\begin{equation}
\begin{cases}
\displaystyle u_t -\operatorname{div}(a(x,t,\nabla u)) = h(u)f &  \text{in}\, Q, \\
u(x,t)=0 & \text{on}\ \Gamma,\\
u(x,0)=u_0(x) &  \text{in}\, \Omega\,; 
\label{pb0}
\tag{P$_0$}
\end{cases}
\end{equation}
provided $h$ is nonincreasing. We say that a distributional solution  $u$ of problem \eqref{pb0} is said to have  {\it  finite energy} provided $u\in L^{p}(0,T;W^{1,{p}}_0(\Omega))$ and  $u_t\in L^{p'}(0,T;W^{-1,{p'}}(\Omega)) + L^1(Q)$. We have the following 

\begin{theorem}\label{uni}
	Let $h$ be nonincreasing, then there is at most one  finite energy solution to \eqref{pb0}. 
\end{theorem}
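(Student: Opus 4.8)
The plan is to run a comparison argument. Let $u_1,u_2$ be two finite energy solutions of \eqref{pb0} and set $w:=u_1-u_2$, so that $w\in L^p(0,T;W^{1,p}_0(\Omega))$, $w_t\in L^{p'}(0,T;W^{-1,p'}(\Omega))+L^1(Q)$ and $w(\cdot,0)=u_0-u_0=0$. Subtracting the two weak formulations \eqref{weakdef3} I would use, for fixed $k>0$, the truncation $T_k(w)$ as a test function, and then exploit three monotonicity (sign) properties simultaneously: the strict monotonicity \eqref{cara3} of $a$ in the gradient variable, the monotonicity of the parabolic term, and the fact that $h$ is nonincreasing.

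Carrying out the computation, the parabolic contribution is handled by the Landes-type integration-by-parts formula (valid for the given regularity of $w_t$), which gives
$$\int_0^T\langle w_t,T_k(w)\rangle=\int_\Omega \tilde T_{k,1}(w(\cdot,T))-\int_\Omega \tilde T_{k,1}(w(\cdot,0))=\int_\Omega \tilde T_{k,1}(w(\cdot,T))\ge 0,$$
where $\tilde T_{k,1}$ is as in \eqref{Ttilde} and we used $w(\cdot,0)=0$. The principal part produces
$$\int_{\{|w|<k\}}\big(a(x,t,\nabla u_1)-a(x,t,\nabla u_2)\big)\cdot(\nabla u_1-\nabla u_2)\ \ge\ 0$$
by \eqref{cara3}, and this quantity is strictly positive on $\{\nabla u_1\neq\nabla u_2\}$. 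Finally, since $f\ge 0$ and $h$ is nonincreasing, $h(u_1)-h(u_2)$ and $T_k(u_1-u_2)$ have opposite signs pointwise, so
$$\int_Q\big(h(u_1)-h(u_2)\big)f\,T_k(w)\ \le\ 0.$$
Hence a sum of two nonnegative finite quantities equals a nonpositive one, which forces both to vanish; in particular $\int_{\{|w|<k\}}(a(x,t,\nabla u_1)-a(x,t,\nabla u_2))\cdot(\nabla u_1-\nabla u_2)=0$ for every $k>0$. Letting $k\to\infty$ and invoking the strict monotonicity \eqref{cara3}, this yields $\nabla u_1=\nabla u_2$ a.e.\ in $Q$; since $w(\cdot,t)\in W^{1,p}_0(\Omega)$ for a.e.\ $t$, Poincar\'e's inequality gives $w\equiv 0$, i.e.\ $u_1=u_2$ a.e.

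I expect the main obstacle to be the admissibility of $T_k(w)$ as a test function, i.e.\ showing that the singular product $(h(u_1)-h(u_2))f\,T_k(w)$ really is in $L^1(Q)$ (equivalently, that the corresponding integral is well defined and not an $\infty-\infty$ indeterminacy), so that the difference of the two weak formulations can legitimately be tested against $T_k(w)$ by a density argument from $C^1_c(\Omega\times[0,T))$. This should be settled by combining: the integrability information extracted from testing each equation with $T_1(u_i)\in W^{1,p}_0(\Omega)\cap L^\infty(Q)$ (using \eqref{cara1}, \eqref{cara2} and the parabolic chain rule), the elementary bounds $0\le T_k(w)\le u_1$ on $\{u_1>u_2\}$ and $|T_k(w)|\le |u_1-u_2|$ in general, and the remark that when $h(0)=\infty$ the requirement $h(u_i)f\in L^1(0,T;L^1_{\rm loc}(\Omega))$ already forces $f=0$ a.e.\ on $\{u_i=0\}$ (when instead $h(0)<\infty$, $h$ is bounded and this point is immediate). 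The parabolic chain rule itself, needed for the time term because of the $L^1(Q)$ component of $w_t$, is by now standard but still requires the usual Steklov-averaging justification.
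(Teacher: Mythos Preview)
Your approach is essentially the paper's: test the difference of the two extended formulations with $T_k(u_1-u_2)$ and exploit simultaneously the monotonicity \eqref{cara3} of $a$, the nonincreasing character of $h$, and the sign of the parabolic term. The paper's only variations are cosmetic: it first isolates the admissibility issue you flag as a separate density lemma (Lemma~\ref{este}, showing every $\varphi\in L^p(0,T;W^{1,p}_0(\Omega))\cap L^\infty(Q)$ is admissible, which gives $h(u_i)f\,T_k(w)\in L^1(Q)$ at once without your pointwise analysis), it multiplies the test function by a time cutoff $\phi(t)=1-t/T$ so as to quote the chain rule of \cite[Lemma~7.1]{dp} rather than your Steklov/Landes argument, and it then concludes from the vanishing of the resulting parabolic term $\tfrac1T\int_Q\tilde T_{k,1}(u_1-u_2)=0$ (using $\tilde T_{k,1}\ge 0$) instead of from the principal part plus Poincar\'e.
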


The first step consists in extending the set of admissible test functions in \eqref{weakdef3}, namely we have

\begin{lemma} \label{este} Let $u$ be a finite energy solution to \eqref{pb0} then $u$ satisfies 
	\begin{equation} \label{def_estesa}
	\displaystyle \int_{0}^T \langle (u_t)_1, \varphi\rangle + \int_{Q} (u_t)_2\varphi + \int_{Q}a(x,t,\nabla u) \cdot \nabla \varphi  =\int_{Q} h(u)f\varphi,
	\end{equation}
	where $(u_t)=(u_t)_1+(u_t)_2$ with $(u_t)_1\in L^{p'}(0,T; W^{-1,{p'}}(\Omega))$,  $(u_t)_2\in L^1(Q)$ and for every $\varphi \in L^p(0,T; W^{1,p}_0(\Omega))\cap L^\infty(Q)$.
\end{lemma}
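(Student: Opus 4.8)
The plan is to establish \eqref{def_estesa} by a density argument: since $C^1_c(\Omega\times[0,T))$ is dense in $L^p(0,T;W^{1,p}_0(\Omega))$ (in a suitable sense), and a finite energy solution has enough regularity to test against such functions, one wants to show both sides of \eqref{weakdef3} extend continuously. The main technical point is handling the time-derivative term, since for merely $C^1_c$ test functions it appears in \eqref{weakdef3} as $-\int_Q u\varphi_t-\int_\Omega u_0\varphi(x,0)$, an integrated-by-parts form, whereas for a finite energy solution we want it in the duality form $\int_0^T\langle(u_t)_1,\varphi\rangle+\int_Q(u_t)_2\varphi$. So the first step is to justify this integration by parts.

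First I would recall the standard parabolic integration-by-parts formula: if $u\in L^p(0,T;W^{1,p}_0(\Omega))$ with $u_t=(u_t)_1+(u_t)_2\in L^{p'}(0,T;W^{-1,p'}(\Omega))+L^1(Q)$, then $u$ admits (after modification on a null set of times) a representative in $C([0,T];L^1(\Omega))$ — actually one typically needs to truncate to make sense of things, using that $T_k(u)\in C([0,T];L^2(\Omega))$ — and for any $\varphi\in C^1_c(\Omega\times[0,T))$ one has the identity
\begin{equation*}
-\int_Q u\varphi_t-\int_\Omega u_0\varphi(x,0)=\int_0^T\langle(u_t)_1,\varphi\rangle+\int_Q(u_t)_2\varphi.
\end{equation*}
This uses the weak formulation \eqref{weakdef3} itself to identify the initial trace of $u$ with $u_0$ (i.e. the distributional time derivative acting on $\varphi$ with $\varphi(\cdot,T)=0$ reproduces $u_0$ at $t=0$). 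Combining this with \eqref{weakdef3} gives that \eqref{def_estesa} holds for $\varphi\in C^1_c(\Omega\times[0,T))$.

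Next I would extend to $\varphi\in L^p(0,T;W^{1,p}_0(\Omega))\cap L^\infty(Q)$ by approximation. Given such a $\varphi$, take $\varphi_j\in C^1_c(\Omega\times[0,T))$ with $\varphi_j\to\varphi$ strongly in $L^p(0,T;W^{1,p}_0(\Omega))$, $\varphi_j\to\varphi$ a.e., and $\|\varphi_j\|_{L^\infty(Q)}\le C$ uniformly (this is the standard truncation-convolution construction; one must be slightly careful that the approximation respects the $L^\infty$ bound, e.g. by composing with $T_M$ for $M=\|\varphi\|_{L^\infty}$). Then: the term $\int_Q a(x,t,\nabla u)\cdot\nabla\varphi_j\to\int_Q a(x,t,\nabla u)\cdot\nabla\varphi$ since $a(x,t,\nabla u)\in L^{p'}(Q)^N$ by \eqref{cara2} and finite energy; the term $\int_Q h(u)f\varphi_j\to\int_Q h(u)f\varphi$ by dominated convergence, using $h(u)f\in L^1(Q)$ (which follows for a finite energy solution, since $h(u)f=u_t-\mathrm{div}(a(x,t,\nabla u))$ pairs with constants, or more directly from Corollary \ref{stimaL1} combined with finite energy) and the uniform $L^\infty$ bound on $\varphi_j$; the term $\int_0^T\langle(u_t)_1,\varphi_j\rangle\to\int_0^T\langle(u_t)_1,\varphi\rangle$ by $L^{p'}$–$L^p$ duality; and $\int_Q(u_t)_2\varphi_j\to\int_Q(u_t)_2\varphi$ by dominated convergence using $(u_t)_2\in L^1(Q)$ and the $L^\infty$ bound. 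Passing to the limit in \eqref{def_estesa} for $\varphi_j$ yields the claim.

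The main obstacle I expect is the rigorous justification of the integration-by-parts formula together with the identification of the initial datum — in particular making sense of $\int_0^T\langle(u_t)_1,\varphi\rangle$ when $u$ itself need not belong to $L^p(0,T;W^{1,p}_0)\cap C([0,T];L^2)$ a priori (only truncations do, in the general singular case), and ensuring that the decomposition $u_t=(u_t)_1+(u_t)_2$ interacts correctly with the test function's boundary behavior at $t=0$. Here one exploits the finite-energy hypothesis crucially: $u\in L^p(0,T;W^{1,p}_0(\Omega))$ with $u_t\in L^{p'}(0,T;W^{-1,p'}(\Omega))+L^1(Q)$ is exactly the setting where the Lions–Magenes / Droniou-type integration-by-parts lemma applies, giving $u\in C([0,T];L^1(\Omega))$ and the identity above; the only delicate bookkeeping is that the $L^1$-part $(u_t)_2$ must be paired with an $L^\infty$ test function, which is why the hypothesis restricts to $\varphi\in L^\infty(Q)$. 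Everything else is routine.
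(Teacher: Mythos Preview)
Your approach is essentially the one in the paper: integrate by parts in \eqref{weakdef3} for smooth $\varphi$ (the paper cites \cite{cw} for this step), then approximate a general $\varphi\in L^p(0,T;W^{1,p}_0(\Omega))\cap L^\infty(Q)$ by smooth functions with a uniform $L^\infty$ bound and pass to the limit term by term.

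The one place your argument is shaky is the claim that $h(u)f\in L^1(Q)$ globally. The definition of distributional solution only gives $h(u)f\in L^1(0,T;L^1_{\rm loc}(\Omega))$; your appeal to ``pairing with constants'' does not work since constants are not admissible in $W^{1,p}_0(\Omega)$, and Corollary~\ref{stimaL1} concerns the approximating sequence $u_n$, not an arbitrary finite energy solution. The paper avoids this issue by first restricting to \emph{nonnegative} $\varphi$ and nonnegative approximants $\varphi_n$: once one knows that every term on the left of \eqref{conn} converges (by duality for $(u_t)_1$ and the diffusion term, and by the uniform $L^\infty$ bound for $(u_t)_2$), the right-hand side $\int_Q h(u)f\varphi_n$ is bounded, and Fatou's lemma yields $h(u)f\varphi\in L^1(Q)$. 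Dominated convergence then closes the argument; the sign-changing case follows by linearity. This is a small but genuine point, since without it you have no dominating function for the singular term near $\partial\Omega$.
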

\begin{proof}
Let $\varphi\in L^p(0,T; W^{1,p}_0(\Omega))\cap L^\infty(Q)$ be  a nonnegative function and 
consider a sequence  $\varphi_n$  of smooth nonnegative functions converging to $\varphi$ in $L^p(0,T; W^{1,p}_0(\Omega))$, without loss of generality one may assume $||\varphi_n||_{ L^\infty(Q)}\leq  ||\varphi||_{ L^\infty(Q)}$. Now we take $\varphi_n$ 
in \eqref{weakdef3} (recall that $\mu\equiv 0$) and we integrate by parts (see for instance \cite{cw}) in order to obtain 
\begin{equation}\label{conn}
	\displaystyle \int_{0}^T \langle (u_t)_1, \varphi_n\rangle + \int_{Q} (u_t)_2\varphi_n + \int_{Q}a(x,t,\nabla u) \cdot \nabla \varphi_n  =\int_{Q} h(u)f\varphi_n.
\end{equation}
Due to the definition of $\varphi_n$ and on the regularity of $u$ one can pass to the limit in the left hand side of the previous equality; in particular  $ h(u)f\varphi_n$ is bounded in $L^1(Q)$ and by Fatou's lemma one obtains that also  $ h(u)f\varphi\in L^1(Q)$. Using dominated convergence theorem one can pass to the limit also on the right hand side of \eqref{conn} and we conclude. 
\end{proof}

\begin{proof}[Proof of Theorem \ref{uni}]
	
	Let $v,w$  be two finite energy solutions of  \eqref{pb0},  and take $T_k(v-w)\phi(t)$ in the difference of formulations \eqref{def_estesa} solved by $v, w$, where we  define  $\phi(t)= \frac{-t}{T} +1$ for $t\in (0,T]$. Using \eqref{cara3} and the assumption on $h$ we obtain that
	\begin{equation*}
		\begin{aligned}
		\displaystyle \int_{0}^T \langle (v_t)_1 - (w_t)_1, T_k(v-w)\phi(t)\rangle 
		+ \int_{Q} \left((v_t)_2 - (w_t)_2\right)T_k(v-w)\phi(t)   \le 0.	
		\end{aligned}
	\end{equation*} 
It follows from  of \cite[Lemma $7.1$]{dp} that  ($\tilde{T}_{k,1}(s)$ is defined in \eqref{Ttilde})
	\begin{equation*}
\begin{aligned}
\displaystyle \frac{1}{T}\int_Q \tilde{T}_{k,1}(v-w) = 0,	
\end{aligned}
\end{equation*} 	
that,  due to the arbitrariness of $k$  and recalling that $v-w \in C([0,T];L^1(\Omega))$,   implies that $v (\tau)=w(\tau)$  for any $\tau\in (0,T]$ and  for almost every $x$ in $\Omega$ and this concludes the proof.

\end{proof}

\begin{remark}
We want to highlight some cases in which the solution to \eqref{pb0} has finite energy. First of all observe that if $h(0)<\infty$  then Lemma \ref{lemmaregolarita} gives some instances  of a finite energy solutions (since, from the equation,  one has $u_t\in L^{p'}(0,T;W^{-1,{p'}}(\Omega)) + L^1(Q)$). Moreover if we restrict to data $a$ and $f$ not depending on $t$, and we consider  $u_0=w(x)$ where $w(x)$ is a  solution  to the associated elliptic problem 
$$
\begin{cases}
-\operatorname{div}(a(x,\nabla w)) = h(w)f &  \text{in}\, \Omega, \\
w=0 & \text{on}\ \partial\Omega\,,
\end{cases}
$$
then, if  $w \in W^{1,p}_0 (\Omega)$ (see \cite{lm,bo, op2}), one has, by Lemma \ref{este}, that $u(x,t)=w(x)$ turns out to be a finite energy solution of \eqref{pb0}. 

\end{remark}

Also in the general case of a nonlinearity $h$ blowing up at zero non-trivial non-stationary finite energy solutions do exist as the following example shows 
\begin{example}

We present a case where $h$ actually blows up at the origin and the solution to \eqref{pb0} has finite energy. Let $\gamma\le 1$ and, for the sake of simplicity let $p=2$, we consider the following problem
\begin{equation*}
\begin{cases}
\displaystyle u_t -\Delta u = f &  \text{in}\, Q, \\
u(x,t)=0 & \text{on}\ \Gamma,\\
u(x,0)=u_0(x) &  \text{in}\, \Omega\,,
\label{ex}
\end{cases}
\end{equation*}
where $0\leq f\in L^{2}(0,T; L^{(2^*)'}(\Omega))$ and $0\le u_0\in L^2 (\Omega)$. By the classical theory of   parabolic equations  one has that  a positive solution $u \in L^2(0,T; W^{1,2}_{0}(\Omega))$ does exist. We observe that $u$ also solves
\begin{equation*}
\begin{cases}
\displaystyle u_t -\Delta u = \frac{g}{u^\gamma} &  \text{in}\, Q, \\
u(x,t)=0 & \text{on}\ \Gamma,\\
u(x,0)=u_0(x) &  \text{in}\, \Omega\,,
\label{ex2}
\end{cases}
\end{equation*}	
where $g=fu^\gamma \in L^1(Q)$. Indeed, applying the H\"older and the Young inequalities, one has
	\begin{equation*}
\begin{aligned}
\int_{Q} fu^\gamma &\le \int_{0}^T ||f||_{L^{\left(\frac{2^{*}}{\gamma}\right)'}(\Omega)}||u||^{\gamma}_{L^{2^{*}}(\Omega)} \le \int_{0}^T ||f||^{\frac{2}{2-\gamma}}_{L^{\left(\frac{2^{*}}{\gamma}\right)'}(\Omega)} + \int_{0}^T ||u||^{2}_{L^{2^{*}}(\Omega)}.  
\end{aligned}
\end{equation*}	
A similar calculation gives the existence of a finite energy solution even if $\gamma>1$ provided we restrict to a datum  $ f\in L^{2}(0,T; L^{(\frac{2^*}{\gamma})'}(\Omega))$. Finally we also underline that in case $h(s)\approx s^{-\gamma}$ near $s=0$  with  $h(s)\ge c$ for all $s\ge 0$ then  a finite energy solution to \eqref{pb0} also exists, using that  $g=\frac{f}{h(u)}\leq \frac{f}{c}$.  
\end{example}

\begin{remark}
 For simplicity we presented our uniqueness result in the homogeneous case  $\mu\equiv 0$; as we have seen the key role in the proof is played by the possibility to extend the set of test functions (Lemma \ref{este}). For this reason, with the same proof, one can allow also nonhomogeneous measure data $\mu\in L^1 (Q)+L^{p'} (0,T; W^{-1,p'}(\Omega))$; as showed in \cite[Proposition 3.1]{ppp} (see also \cite[Theorem 2.6]{kr}) measure with such a structure are absolutely continuous with respect to the parabolic $p$-capacity. 
 \end{remark}

\section*{Acknowledgements}

\noindent The authors are  partially supported by  the Gruppo Nazionale per l'Analisi Matematica, la Probabilit\`a e le loro Applicazioni (GNAMPA) of the Istituto Nazionale di Alta Matematica (INdAM).

\end{document}